\documentclass[12pt]{amsart}
\usepackage{amsmath,amssymb,amsbsy,amsfonts,latexsym,amsopn,amstext,cite,
                                               amsxtra,euscript,amscd,bm}
\usepackage{url}

\usepackage{mathrsfs}
\usepackage{enumitem}
\usepackage{color}
\usepackage[colorlinks,linkcolor=blue,anchorcolor=blue,citecolor=blue,backref=page]{hyperref}
\usepackage{color}
\usepackage{graphics,epsfig}
\usepackage{graphicx}
\usepackage{float}
\usepackage{epstopdf}
\hypersetup{breaklinks=true}

\usepackage[np]{numprint}
\npdecimalsign{\ensuremath{.}}

\usepackage{bibentry}

\usepackage[english]{babel}
\usepackage{mathtools}
\usepackage{todonotes}
\usepackage{url}
\usepackage[colorlinks,linkcolor=blue,anchorcolor=blue,citecolor=blue,backref=page]{hyperref}

\usepackage[norefs,nocites]{refcheck}

\DeclareMathOperator*\sign{sign}

\begin{document}

\newtheorem{theorem}{Theorem}
\newtheorem{lemma}[theorem]{Lemma}
\newtheorem{claim}[theorem]{Claim}
\newtheorem{cor}[theorem]{Corollary}
\newtheorem{conj}[theorem]{Conjecture}
\newtheorem{prop}[theorem]{Proposition}
\newtheorem{definition}[theorem]{Definition}
\newtheorem{question}[theorem]{Question}
\newtheorem{example}[theorem]{Example}
\newcommand{\hh}{{{\mathrm h}}}
\newtheorem{remark}[theorem]{Remark}

\numberwithin{equation}{section}
\numberwithin{theorem}{section}
\numberwithin{table}{section}
\numberwithin{figure}{section}

\def\sssum{\mathop{\sum\!\sum\!\sum}}
\def\ssum{\mathop{\sum\ldots \sum}}
\def\iint{\mathop{\int\ldots \int}}

\newcommand{\diam}{\operatorname{diam}}

\def\squareforqed{\hbox{\rlap{$\sqcap$}$\sqcup$}}
\def\qed{\ifmmode\squareforqed\else{\unskip\nobreak\hfil
\penalty50\hskip1em \nobreak\hfil\squareforqed
\parfillskip=0pt\finalhyphendemerits=0\endgraf}\fi}

\newfont{\teneufm}{eufm10}
\newfont{\seveneufm}{eufm7}
\newfont{\fiveeufm}{eufm5}
%
%
\newfam\eufmfam
     \textfont\eufmfam=\teneufm
\scriptfont\eufmfam=\seveneufm
     \scriptscriptfont\eufmfam=\fiveeufm
%
%
\def\frak#1{{\fam\eufmfam\relax#1}}
\def\muS{\mu_{\mathsf {S}}}
\def\muM{\mu_{\mathsf {M}}}
\def\muo{\mu_{\bomega}}

\def\muMd{\mu_{\mathsf {M, \delta}}}

\newcommand{\bflambda}{{\boldsymbol{\lambda}}}
\newcommand{\bfmu}{{\boldsymbol{\mu}}}
\newcommand{\bfxi}{{\boldsymbol{\eta}}}
\newcommand{\bfeta}{{\boldsymbol{\eta}}}
\newcommand{\bfrho}{{\boldsymbol{\rho}}}

\def\eps{\varepsilon}

\def\fI{\mathfrak I}
\def\fK{\mathfrak K}
\def\fT{\mathfrak{T}}
\def\fL{\mathfrak L}
\def\fR{\mathfrak R}

\def\fA{{\mathfrak A}}
\def\fB{{\mathfrak B}}
\def\fC{{\mathfrak C}}
\def\fM{{\mathfrak M}}
\def\fS{{\mathfrak  S}}
\def\fU{{\mathfrak U}}

 \def\sfH {\mathsf {H}}
  \def\sfV {\mathsf {V}}
\def\T {\mathsf {T}}
\def\Tor{\mathsf{T}_d}
\def\Tore{\widetilde{\mathrm{T}}_{d} }

\def\sM {\mathsf {M}}

\def\ss{\mathsf {s}}

\def\Kdn{\cK_d(n)}
\def\Kmn{\cK_m(n)}
\def\Kpn{\cK_p(n)}
\def\Kma{\cK_m(a)}

\def \balpha{\bm{\alpha}}
\def \bbeta{\bm{\beta}}
\def \bgamma{\bm{\gamma}}
\def \bdelta{\bm{\delta}}
\def \bzeta{\bm{\zeta}}
\def \blambda{\bm{\lambda}}
\def \bchi{\bm{\chi}}
\def \bphi{\bm{\varphi}}
\def \bpsi{\bm{\psi}}
\def \bnu{\bm{\nu}}
\def \bxi{\bm{\xi}}
\def \bomega{\bm{\omega}}

\def \bell{\bm{\ell}}

\def\eqref#1{(\ref{#1})}

\def\vec#1{\mathbf{#1}}

\newcommand{\abs}[1]{\left| #1 \right|}

\def\Sp{\mathbb{S}}

\def\Zq{\mathbb{Z}_q}
\def\Zqx{\mathbb{Z}_q^*}
\def\Zd{\mathbb{Z}_d}
\def\Zdx{\mathbb{Z}_d^*}
\def\Zf{\mathbb{Z}_f}
\def\Zfx{\mathbb{Z}_f^*}
\def\Zp{\mathbb{Z}_p}
\def\Zpx{\mathbb{Z}_p^*}
\def\cM{\mathcal M}
\def\cE{\mathcal E}
\def\cH{\mathcal H}

\def\le{\leqslant}

\def\ge{\geqslant}

\def\sfB{\mathsf {B}}
\def\sfC{\mathsf {C}}
\def\L{\mathsf {L}}
\def\FF{\mathsf {F}}

\def\sE {\mathscr{E}}
\def\sS {\mathscr{S}}

\def\cA{{\mathcal A}}
\def\cB{{\mathcal B}}
\def\cC{{\mathcal C}}
\def\cD{{\mathcal D}}
\def\cE{{\mathcal E}}
\def\cF{{\mathcal F}}
\def\cG{{\mathcal G}}
\def\cH{{\mathcal H}}
\def\cI{{\mathcal I}}
\def\cJ{{\mathcal J}}
\def\cK{{\mathcal K}}
\def\cL{{\mathcal L}}
\def\cM{{\mathcal M}}
\def\cN{{\mathcal N}}
\def\cO{{\mathcal O}}
\def\cP{{\mathcal P}}
\def\cQ{{\mathcal Q}}
\def\cR{{\mathcal R}}
\def\cS{{\mathcal S}}
\def\cT{{\mathcal T}}
\def\cU{{\mathcal U}}
\def\cV{{\mathcal V}}
\def\cW{{\mathcal W}}
\def\cX{{\mathcal X}}
\def\cY{{\mathcal Y}}
\def\cZ{{\mathcal Z}}
\newcommand{\rmod}[1]{\: \mbox{mod} \: #1}

\def\cg{{\mathcal g}}

\def\vy{\mathbf y}
\def\vr{\mathbf r}
\def\vx{\mathbf x}
\def\va{\mathbf a}
\def\vb{\mathbf b}
\def\vc{\mathbf c}
\def\ve{\mathbf e}
\def\vh{\mathbf h}
\def\vk{\mathbf k}
\def\vm{\mathbf m}
\def\vz{\mathbf z}
\def\vu{\mathbf u}
\def\vv{\mathbf v}
\def\v0{\mathbf 0}

\def\ee{{\mathbf{\,e}}}
\def\eep{{\mathbf{\,e}}_p}
\def\eem{{\mathbf{\,e}}_m} 

\def\Tr{{\mathrm{Tr}}}
\def\Nm{{\mathrm{Nm}}}

 \def\SS{{\mathbf{S}}}

\def\lcm{{\mathrm{lcm}}}

 \def\0{{\mathbf{0}}}

\def\({\left(}
\def\){\right)}
\def\fl#1{\left\lfloor#1\right\rfloor}
\def\rf#1{\left\lceil#1\right\rceil}
\def\sumstar#1{\mathop{\sum\vphantom|^{\!\!*}\,}_{#1}}

\def\mand{\qquad \mbox{and} \qquad}

\def\tblue#1{\begin{color}{blue}{{#1}}\end{color}}




\hyphenation{re-pub-lished}

\mathsurround=1pt

\def\bfdefault{b}

\def \F{{\mathbb F}}
\def \K{{\mathbb K}}
\def \N{{\mathbb N}}
\def \Z{{\mathbb Z}}
\def \P{{\mathbb P}}
\def \Q{{\mathbb Q}}
\def \R{{\mathbb R}}
\def \C{{\mathbb C}}
\def\Fp{\F_p}
\def \fp{\Fp^*}

 \def \xbar{\overline x}

\title{Chowla and Sarnak Conjectures for Kloosterman Sums}

\author[E.~H.~El Abdalaoui]{El Houcein El Abdalaoui}
\address{
Laboratoire de Math{\'e}matiques Rapha{\"e}l Salem, 
Universit{\'e} de Rouen Normandie,  Saint-{\'E}tienne-du-Rouvray, 
F76801, France}
\email{elhoucein.elabdalaoui@univ-rouen.fr}

 \author[I. E. Shparlinski] {Igor E. Shparlinski}

\address{Department of Pure Mathematics, University of New South Wales,
Sydney, NSW 2052, Australia}
\email{igor.shparlinski@unsw.edu.au}

 \author[R. S. Steiner] {Raphael S. Steiner}
 \address{Department of Mathematics,    Eidgen{\"o}ssische Technische Hochschu\-le, Z{\"u}rich, 8092, Switzerland}
\email{raphael.steiner@math.ethz.ch}

\begin{abstract} 
We formulate several analogues of the Chowla and  Sarnak conjectures, 
which are widely known in the setting of the M{\"o}bius function, in the setting of Kloosterman sums. 
We then show that for Kloosterman sums, in some cases, these conjectures can be established 
unconditionally. 
\end{abstract} 

\keywords{Chowla  conjecture,  Sarnak conjecture, Kloosterman sum}
\subjclass[2020]{11L07,  11T23, 37B40}

\maketitle

\tableofcontents

\section{Introduction}

\subsection{Background and motivation}

Recently,  there has been a lot of activity related to the  Chowla and 
Sarnak conjectures for the 
{\it M{\"o}bius function\/} $\mu(n)$. 
We recall that these conjectures assert  non-correlation  between 
shifted values of $\mu(n)$ and  between  $\mu(n)$   and 
low complexity sequences, respectively,  
 see~\cite{elAb, elAbNe, elabAM, AS, BSZ,  FKL,HeRa,LiSa,Mul,MRT,Sarnak,SaUb,Tao, TaTe} and references therein. Both conjectures are special cases of the general {\it M{\"o}bius Randomness Law\/}, formulated, for example, 
in~\cite[Section~13.1]{IwKow}.

Here, we introduce and investigate similar conjectures for  {\it Kloosterman sums\/}, 
see~\cite[Equation~(1.56)]{IwKow}.  Generally, these conjectures appear to be much harder for
Kloosterman sums because of:
\begin{itemize}
\item[(i)] {\it lack of multiplicativity\/} between Kloosterman sums;
\item[(ii)]  a {\it dense set\/} of their values rather a discrete set \{-1,0,1\} as in the case of the M{\"o}bius function.
\end{itemize}
Nevertheless, we establish them in several special cases. 

In fact, some special instances of what one may call {\it Kloosterman Randomness Law\/}
is already known, see Section~\ref{sec:Rand-K} below. Moreover,  quite surprisingly, in some cases,
for Kloosterman sums we are able to establish results which are superior to those known for the M{\"o}bius 
function. 
 
\subsection{Set-up}
We denote the residue ring modulo $m$ by $\Z_m$ 
and denote the group of units of $\Z_m$ by $\Z_m^*$. 

It is more convenient to work with  {\it normalised  Kloosterman sums\/}, 
which, for integers $a$ and $m \ge 1$, we define as
$$
\Kma=\frac{1}{\sqrt{m}} \sum_{x \in \Z_m^*} \eem\(ax +\xbar \),
$$
where $\xbar$ is the multiplicative inverse of $x$ modulo $m$ and
$$
\eem(z) = \exp(2 \pi i z/m).
$$
We shall further simply write $\ee(z)$ for $\ee_1(z)$.
By Weil~\cite{Weilbound} and Estermann~\cite{EstBound}, we have the following bound on Kloosterman sums
\begin{equation}
\label{eq:Weil}
|\Kma|\le  2^{\omega(m)} \begin{cases}1, & 2^5 \! \nmid \ m,\\ 2^{1/2}, & 2^5 \| m, \\ 2, & 2^6 \| m, \\ 2^{3/2}, & 2^7 \mid m, \end{cases}
\end{equation}
where $\omega(n)$ is the number of distinct primes factors in $n$, cf.~\cite[Ch. 9]{KnightlyLi},
and for a prime $p$ and an integer $k\ge 1$, we use $p^k \| m$ to denote that $p^k\mid m$ but
$p^{k+1} \nmid m$. 
It is also easy to see that the values of $\Kma$ are real. Indeed, 
$$
\Kma=\frac{1}{\sqrt{m}} \sum_{x \in \Z_m^*} \eem\(ax +\xbar \) =
\frac{1}{\sqrt{m}} \sum_{x \in \Z_m^*} \eem\(-ax - \xbar \) = \overline{\Kma}
$$
and thus $\Kma \in \R$.

Since Kloosterman sums depend on two parameters $a$ and $m$, following standard 
terminology, one can study
\begin{itemize}
\item {\it  horizontal  randomness\/}, that is, one aims to show that the sums of the types
$$
\sum_{m=1}^M a_m \Kma  \mand \sum_{m=1}^M \cK_{m+h_1} (a) \ldots \cK_{m+h_s} (a)
$$
are small compared to their trivial bound $M^{1+o(1)}$, which follows from~\eqref{eq:Weil}, 
for some ``interesting'' (bounded)  sequence $a_m$, $m =1, \ldots, M$, and some integer shifts $0 \le h_1< \ldots < h_s$. 

\item {\it  vertical randomness\/}, that is, one aims to show that the sums of the types
\begin{equation}
\label{eq:H-PRN}
\sum_{n =1}^N b_n \cK_{m}(n)  \mand \sum_{n =1}^N \cK_{m} (n+h_1) \ldots  \cK_{m} (n+h_s)
\end{equation}
are small compared to their trivial bound $N M^{o(1)}$, which follows from~\eqref{eq:Weil}, 
for some ``interesting'' (bounded) arithmetic  sequence $b_n$, $n =1, \ldots, N$, and some integer shifts $0 \le h_1< \ldots < h_s< q$. 
\end{itemize}

It is especially interesting to estimate the above sums with a power saving and obtain estimates of the type
$O(M^{1-\eta})$ and $O(NM^{-\eta})$, respectively, for some positive constant $\eta>0$. 

 \subsection{Notation}
\label{sec:not}

Throughout the paper, the notation $U = O(V )$, 
$U \ll V$ and $ V\gg U$  are equivalent to $|U|\leqslant c|V| $ for some positive constant $c$, 
which throughout the paper may depend on the degree $d$ and occasionally on the small real positive 
parameters $\varepsilon$ and $\delta$.  

For any quantity $V> 1$, we write $V^{o(1)}$ (as $V \to \infty$) to indicate a function of $V$ which does not exceed $V^\eps$ for any $\eps> 0$, provided $V$ that is large enough. The conjunct notation $U \ll V^{o(1)}$ is thus subsequently to be interpret as for any $\eps$, there is a $V_0=V_0(\eps)>1$ and a $c=c(\eps, V_0)>0$, such that $U \le cV^{\eps}$ for all $V \ge V_0$.


More generally, when we say that a certain parameter is {\it fixed\/} this means that we allow all implied
constants to depend on this parameter.


 
As usual, we use $\mu(m)$ to denote the M{\"o}bius function, that is, $\mu(m) = (-1)^s$ when $m$ is a product of 
$s$ distinct primes and $\mu(m) =0$ otherwise. 
 
We use $\# \cA$ to denote the cardinality of a set $\cA$.  

Throughout the paper, $p$ always denotes a prime number. 
 
We always follow the following {\it conventions\/}: 
\begin{itemize}
\item  When we write $\Kma$, we always assume that $a$ is \emph{fixed} and thus the implied constant in `$\ll$' 
and similar expressions may depend on $a$ (and hence ~\eqref{eq:Weil} simply implies $|\Kma| \le m^{o(1)}$).
\item  When we write $ \cK_{m} (n)$, or  $\cK_{m} (n+h)$, we 
assume that $n$ is a parameter varying in some interval $[1,N]$, where $N$ may grow as fast as some power of $m$.
\end{itemize}
Given an number theoretic function $\xi: \N \to \C$, we define the {\it horizontal\/} averages
\begin{equation}
\sfH _{a,\xi}(M) =   \sum_{m =1}^M \xi(m) \cK_m(a)  
\label{eq:Haxi}
\end{equation}
and the {\it vertical} averages
$$
\sfV_\xi(m;N) =  \sum_{n =1}^N \xi(n) \cK_m(n).
$$
These are our main objects of study. 

It is also convenient to define the sums 
\begin{equation}
\overline\sfH_{a}(M) =  \sum_{m =1}^M    |\cK_m(a) | \mand 
\overline\sfV(m;N) =  \sum_{n =1}^N  |\cK_m(n) | , 
\label{eq:HVbar}
\end{equation}
which we are going to use a bench-marks for our estimates on $\sfH_{a,\xi}(M)$ and  $\sfV_\xi(m;N)$. 

It is certainly natural to expect that 
$$
 \overline\sfH_{a}(M) =  M^{1+ o(1)}   \mand  \overline\sfV(m;N) = N^{1+ o(1)}
$$
in wide range of parameters. 
For example,   by~\cite[Theorem~1.2]{FM}, for any fixed $r$  we have 
$$
M \frac{\(\log \log M\)^r}{ \log M} \ll  \overline\sfH_{1}(M)   \ll M \frac{\(\log \log M\)^{2 - 8/(3\pi)}}{\(  \log M \)^{1 - 8/(3\pi)}}, 
$$
and perhaps a similar result also holds for $\overline\sfH_{a}(M) $ for any fixed integer $a\ne 0$.

The bound~\eqref{eq:Weil} further suggests to define
$$\cK_m^{*}(a)=\frac{|\cK_m(a) |}{2^{\omega(m)}},$$
where $\omega(n)$ is the number of distinct primes factors in $n$.  
Thus, in particular, for an integer $a\ne 0$
\begin{equation}\label{eq:K* < 1}
\cK_m^{*}(a) \le 2^{3/2}, 
\end{equation}
Analogously to~\eqref{eq:Haxi} and~\eqref{eq:HVbar}, we define $\sfH_{a,\xi}^*(M)$ and $\overline\sfH_{a}^*(M)$. According to Fouvry and Michel~\cite[Theorem~1.1]{FM}, for any fixed $r \geq 1$ we have
\begin{equation}\label{FM-C2}
M \frac{\(\log \log M\)^r}{ \log M} \ll  \overline\sfH_{1}^*(M)   \ll M\(\frac{\log \log M}{\log M}\)^{1 - 4/(3\pi)}. 
\end{equation}


\section{Main results}

\subsection{Previous results}
\label{sec:Rand-K}

When it comes to {\it horizontal randomness\/}, only a few techniques have been successfully applied. The one that stands out is, of course, the use of Kuznetsov's trace formula. Kuznetsov~\cite{Kuz} developed said formula to prove a strong bound towards the Linnik--Selberg conjecture on sums of Kloosterman sums (see also~\cite[Section~16.1]{IwKow})
\begin{equation}
	\sfH _{a,\mathbf 1}(M) \ll M^{2/3+o(1)}, \quad a\ge 1, \text{ fixed},
	\label{eq:Linnik-Selberg-Kuz}
\end{equation}
where $\mathbf 1$ indicates the constant weight $\xi(m)=1$. Similar results stemming from more general Kuznetsov formul{\ae} were subsequently derived, see for example~\cite{DesIw, Drap,BlMil,DrMa,FM2,GaSe, KiralYoung, Stein-Twist-Linnik}. These cover horizontal averages against sequences of the type  $\xi(m)=1$ if $q\mid m$ and  $\xi(m)=0$ otherwise
 for a fixed integer $q$; $\xi=\chi$, a fixed Dirichlet character; or mixtures thereof.



In the direction of {\it vertical randomness\/}, several results and techniques are known, often in the most interesting case of prime $m = p$.


\begin{itemize}[leftmargin= 0.5cm]

\item  {\it Correlations between shifted values:} For a prime $m = p$, general results of  Fouvry,   Kowalski and  
Michel~\cite[Corollary~1.6]{FKM2} contain as a special case 
 a bound on the second sum   in~\eqref{eq:H-PRN}.  Furthermore, Fouvry, Michel,  Rivat and   S{\'a}rk{\"o}zy~\cite[Theorem~1.1]{FMRS} have estimated a variant of 
the second sum in  in~\eqref{eq:H-PRN} with the product of the 
sign-functions $\sign \cK_p(n+h_1)\ldots \sign \cK_p(n+h_s)$ instead of the sums themselves. 

\item {\it Correlations with some arithmetic functions:}
Fouvry,  Kowalski and Michel~\cite[Theorem~1.7]{FKM1}  and 
Kowalski, Michel and Sawin~\cite[Corollary~1.4]{KMS2} have given bounds
$$
\sfV_\mu(m;N)  \ll N^{1-\eta}   \mand \sfV_\tau(m;N)  \ll N^{1-\eta}
$$ 
with the M{\"o}bius $\mu(n)$ and divisor $\tau(n)$ functions provided that for some 
fixed $\varepsilon> 0$ we have
$$
N \ge p^{3/4  + \varepsilon} \mand  N \ge p^{2/3  + \varepsilon} ,
$$
respectively, where $\eta > 0$ depends only on $\varepsilon > 0$. Perhaps the argument 
of the proof of~\cite[Theorem~1.8]{BFKMM2},   can  be  used to improve the dependence $\eta$ on 
$\varepsilon$ for sums with $\mu(n) \cK_p(n) $ and thus improve the bound of~\cite[Theorem~1.7]{FKM1}, however it is not likely to help to extend any of  the above ranges.
Korolev and Shparlinski~\cite[Theorems~2.1 and~2.2]{KorShp}, using a different method, have obtained nontrivial bounds on both sums  already for 
$$
 N \ge p^{1/2  + \varepsilon} ,
$$
however the saving is only logarithmic. 
All these methods also apply in broader generality to sums with other arithmetic functions. 

It is also quite plausible that the results and methods  of~\cite{LSZ1, LSZ2} are able to produce 
estimates on similar sums modulo a prime power $m = p^k$ for a fixed $p$ and growing $k$, and 
in fact it is reasonable to expect that these results are  nontrivial starting with very small 
values of $N$, for example,  for 
$$
 N \ge p^{\varepsilon}. 
$$

\item {\it Correlations with some digital  functions:} For any integers $0\le s\le r$,  let $\chi_{r,s}(n)$ be the characteristic function 
of   the set $\cG_s(r) $ of $r$-bit integers with only   $s$ nonzero binary digits, 
thus 
$$
 \#  \cG_s(r) = \binom{r}{s}.
 $$
Korolev and Shparlinski~\cite[Section~9]{KorShp} have shown that if 
$$
2^r = p^{1+o(1)} \mand r/2 \ge s \ge (\rho_0 + \delta) r, 
$$
where $\rho_0 = 0.11002786\ldots$  is the root of the equation \vskip-9pt
$$
H(\vartheta) = 1/2, \qquad 0 \le \vartheta \le 1/2,
$$
with the {\it binary entropy function\/}  \vskip-9pt
$$
H(\gamma) = \frac{- \gamma \log \gamma  -  (1-\gamma)
\log (1-\gamma)}{\log 2}.
$$
Then, for any $\varepsilon > 0$, there exists $\eta > 0$ such that, 
$$
\sum_{n=1}^{2^r-1} \cK_p(n) \chi_{r,s}(n) \ll \binom{r}{s}^{1-\eta}.
$$

\item {\it Bilinear correlations:} The results of~\cite{BFKMM1,KSWX,KMS1,KMS2,Shp,ShpZha} 
give various bounds on bilinear sums with Kloosterman sums 
$$
 \sum_{k=1}^K  \sum_{n =1}^N \alpha_k \cK_m(kn),  
\mand \sum_{k=1}^K  \sum_{n =1}^N \alpha_k\beta_n \cK_m(kn),
$$
which are known as {\it Type~I\/} and {\it Type~II\/} sums, with some complex weights. 
Bounds of such sums, besides being other instances of Kloosterman randomness, are also important for many applications, see~\cite{BFKMM1,BFKMM2,KeSh,KMS1}.

\end{itemize}

\subsection{New results in the horizontal aspect}
It is natural to assume that both $ \cK_m(a)$ and   $ \cK_m^*(a)$ and the M{\"o}bius function function $\mu(m)$ 
are not correlated.   

\begin{conj}
\label{conj:K-mu}  For any fixed integer $a \ne 0$, we have 
$$
\sfH_{a,\mu}(M) =  o \(\overline\sfH_{a}(M)\) \mand \sfH_{a,\mu}^*(M) =  o \(\overline\sfH_{a}^*(M)\)
$$
as $M\to \infty$.
\end{conj}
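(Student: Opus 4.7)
The overall strategy is to exploit the twisted multiplicativity of Kloosterman sums on squarefree moduli and reduce the correlation with $\mu$ to bilinear sums accessible through the methods surveyed in Section~\ref{sec:Rand-K}. Since $\mu(m)$ vanishes unless $m$ is squarefree, both $\sfH_{a,\mu}(M)$ and $\sfH_{a,\mu}^*(M)$ range only over squarefree $m$ coprime to $a$, and for such $m = p_1 \cdots p_k$ twisted multiplicativity gives
$$
\cK_m(a) \;=\; \prod_{i=1}^{k} \cK_{p_i}\bigl(a\,\overline{(m/p_i)}^{\,2}\bigr),
$$
with each inverse taken modulo the corresponding prime. Consequently, both $|\cK_m(a)|$ and $\cK_m^*(a)$ factor as products over prime divisors, with local factors bounded by $2$ and $1$ respectively.

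For $\sfH_{a,\mu}(M)$ I would next apply a Vaughan-type decomposition of $\mu$ to reduce to bilinear (Type~I and Type~II) sums of the form
$$
\sum_{k \le K}\sum_{\substack{n \le M/k \\ (n,k)=1}} \alpha_k\,\beta_n\,\cK_{kn}(a) \;=\; \sum_{k,n} \alpha_k\beta_n\,\cK_k\bigl(a\,\overline{n}^{\,2}\bigr)\,\cK_n\bigl(a\,\overline{k}^{\,2}\bigr)
$$
with bounded coefficients $\alpha_k,\beta_n$. Cauchy--Schwarz in $n$ and an appeal to the bilinear Kloosterman bounds and Type~I/II estimates listed at the end of Section~\ref{sec:Rand-K} (after grouping by squarefree parts and inserting a smooth partition of unity) should produce the necessary cancellation in the Type~II ranges, while the Type~I sums can be attacked through Kuznetsov's formula~\eqref{eq:Linnik-Selberg-Kuz} and its generalizations. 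In view of the benchmark $\overline\sfH_a(M) \gg M(\log\log M)^{r}/\log M$, even a saving of $(\log M)^{-A}$ for sufficiently large $A$ would suffice for the conjectured $o$-bound.

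The star version $\sfH_{a,\mu}^*(M)$ is, in a sense, cleaner because on squarefree $m$ it becomes the signed sum
$$
\sfH_{a,\mu}^*(M) \;=\; \sum_{\substack{m \le M \\ m \text{ squarefree}}} (-1)^{\omega(m)} \prod_{p \mid m} \frac{|\cK_p\bigl(a\,\overline{(m/p)}^{\,2}\bigr)|}{2}
$$
of a nearly multiplicative, real, nonnegative function of modulus at most $1$ against $\mu$. After handling the twist dependence — for instance, by decoupling via orthogonality over the finitely many residue classes of $(m/p) \bmod p$ — one can appeal to a Hal\'asz or Matom\"aki--Radziwi\l\l--Tao type inequality for real, bounded multiplicative functions. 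The required divergence $\sum_p \bigl(1 - |\cK_p(a)|/2\bigr)/p = \infty$ is known from Katz's Sato--Tate equidistribution of Kloosterman angles, so the Hal\'asz input would indeed yield $o$ of the benchmark $\overline\sfH_a^*(M)$ from~\eqref{FM-C2}.

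The principal obstacle in both parts is the twist: the argument $a\,\overline{(m/p)}^{\,2}$ varies with the other prime factors of $m$, so $\cK_m(a)$ is only genuinely \emph{twisted}-multiplicative, not multiplicative. Existing Kuznetsov-based and bilinear sum estimates are formulated for a fixed first argument, and adapting them uniformly over the twists, while keeping losses smaller than the benchmark $\overline\sfH_a(M)$ (which is essentially $M^{1-o(1)}$), is the core technical hurdle. A secondary obstacle for the star version is that standard pretentious-type theorems require genuine multiplicativity, so one must first smooth or average out the twist dependence of $|\cK_p(a\,\cdot)|$ before applying such results.
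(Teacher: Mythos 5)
The statement you set out to prove is presented in the paper as Conjecture~\ref{conj:K-mu} and is left \emph{open} there: the authors offer only indirect evidence for it (Theorems~\ref{thm:K k-free} and~\ref{thm:K phi}, which replace $\mu$ by $\kappa_k$ and $\varphi$, and the conditional Theorem~\ref{conj:K-S}). So there is no proof in the paper to compare against, and your proposal must stand on its own as a complete argument --- which it does not. The decisive gap is in the bilinear step. After Vaughan's identity you arrive at \emph{horizontal} bilinear sums $\sum_{k,n}\alpha_k\beta_n\,\cK_{kn}(a)$, in which the \emph{modulus} is the product $kn$; but every bilinear Kloosterman bound surveyed in Section~\ref{sec:Rand-K} (the Type~I/II estimates of~\cite{BFKMM1,KSWX,KMS1,KMS2,Shp,ShpZha}) concerns \emph{vertical} sums $\sum_{k,n}\alpha_k\beta_n\,\cK_m(kn)$ with a \emph{fixed} modulus $m$ and varying argument. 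These are entirely different objects, and no nontrivial Type~II bound for the horizontal version is known: by twisted multiplicativity the first argument $a\,\overline{n}^{\,2}$ of $\cK_k(\cdot)$ varies with $n$, so Cauchy--Schwarz in $n$ produces correlations of Kloosterman sums with two independently varying twists, for which neither Kuznetsov's formula (stated for a fixed first argument, as in~\eqref{eq:Linnik-Selberg-Kuz}) nor the Weil bound supplies the required cancellation. Identifying this twist as ``the core technical hurdle'' is accurate, but naming the obstacle is not the same as overcoming it.

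The second half has the same defect. A Hal\'asz or Matom{\"a}ki--Radziwi{\l}{\l}--Tao input needs a genuinely multiplicative function, and $|\cK_m(a)|$ is only twisted-multiplicative. Your proposed fix --- ``decoupling via orthogonality over the finitely many residue classes of $(m/p)\bmod p$'' --- does not work: the number of residue classes modulo $p$ is $p$, which is unbounded over the primes dividing $m$, so there is nothing finite to decouple over, and any orthogonality expansion over those classes incurs a loss that swamps the saving you are after. (The divergence $\sum_p\bigl(1-|\cK_p(a)|/2\bigr)/p=\infty$ coming from the Sato--Tate law for Kloosterman angles is correct, but it is an input you cannot yet feed into any theorem that applies here.) A further small slip: $\mu(m)\ne 0$ restricts to squarefree $m$, but not to $m$ coprime to $a$, so the local factors at primes $p\mid a$ need separate (easy, but nonzero) treatment. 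In short, your write-up is a sensible research programme that correctly isolates why the problem is hard --- precisely the lack of multiplicativity flagged in the introduction --- but both of its load-bearing steps invoke tools that do not exist in the horizontal aspect, and the statement remains, as in the paper, a conjecture.
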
  

It is quite possible that the convergence to zero in Conjecture~\ref{conj:K-mu} is quite fast 
and in fact 
$$
\sfH_{a,\mu}(M)  \ll M^{-\eta} \overline\sfH_{a}(M)
\mand \sfH_{a,\mu}^*(M)  \ll M^{-\eta} \overline\sfH_{a}^*(M), 
$$
for some constant $\eta>0$.

This leads us to formulating the following versions of the {\it Chowla conjecture\/}, see~\cite{Sarnak,Tao}, for 
the  Kloosterman sums. We note that since we still do not know the exact order of 
magnitude of $\overline\sfH_{1}^*(M)$, see~\eqref{FM-C2}, we need more explicit 
bounds on the assumed rate of decay than in the classical Chowla conjecture. 

\begin{conj}
	\label{conj:K-Chowla-II}  For any fixed integer $a \ne 0$,  and  any fixed positive integers $\nu_1, \ldots, \nu_s$
	and  integers  $ h_s > \ldots > h_1 \ge 0$, as  $M\to \infty$, we have:
\begin{itemize}
\item if  $\nu_1, \ldots, \nu_s$ are  not  all  even	, then 
$$
	\left| \frac{1}{ M  } \sum_{m=1}^M \cK_{m+h_1}^*(a)^{\nu_1} \ldots  \cK_{m+h_s}^*(a)^{\nu_s} \right |  
	= o\(\frac{\overline\sfH_{a}^*(M)  } {M}\)^{\nu_1 + \ldots + \nu_s}	
$$
\item if  $\nu_1, \ldots, \nu_s$ are    all  even, then 
$$
	\left| \frac{1}{ M  } \sum_{m=1}^M \cK_{m+h_1}^*(a)^{\nu_1} \ldots  \cK_{m+h_s}^*(a)^{\nu_s} \right |  
	 \le \(A \frac{\overline\sfH_{a}^*(M)  } {M}\)^{\nu_1 + \ldots + \nu_s}	. 
$$
with some constant $A \ge 1$ which may  depend only upon $a$ and is uniform with 
respect to all other parameters. 
\end{itemize}
\end{conj}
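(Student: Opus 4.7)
The plan is to exploit the twisted multiplicativity of Kloosterman sums under the Chinese Remainder Theorem in combination with (a conjectural) horizontal Sato--Tate law. Twisted multiplicativity factors $\cK_m(a)$ as a bounded product of local Kloosterman sums $\cK_{p^k}(\cdot)$ indexed by prime powers $p^k\|m$; after normalising by $2^{\omega(m)}$ each local factor satisfies $\cK^*_{p^k}(\cdot)\ll 1$ by \eqref{eq:K* < 1}. Applying this to each $m+h_i$ in the conjectured $s$-fold product converts it into a product over primes $p$ dividing $\prod_i(m+h_i)$, whose local factor at $p$ depends only on the set $\{i:p\mid m+h_i\}$ of shifts that $p$ hits.

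In the all-even case I would first sieve out ``coincidence'' primes dividing two or more distinct $m+h_i$: any such $p$ must divide some $h_i-h_j$ or be $O(1)$, and the total contribution is controlled by standard sieve bounds. Once these are removed, the local factors from the $s$ shifts are supported on disjoint prime sets, and the claim reduces to bounding a single-shift moment
\[
\frac{1}{M}\sum_{m=1}^M \cK_{m+h_i}^*(a)^{\nu_i} \le \Bigl(A\,\frac{\overline\sfH_{a}^*(M)}{M}\Bigr)^{\nu_i}
\]
uniformly in even $\nu_i$. This is a higher-moment statement that should follow, assuming horizontal Sato--Tate, from combining \eqref{FM-C2} for the first moment with the semicircle moments $\frac{1}{\pi}\int_{-2}^{2}t^{2k}\sqrt{1-t^2/4}\,dt$ of the local factors, via a moment-method argument of Tur\'an--Kubilius type over prime divisors of $m+h_i$.

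For the not-all-even case, genuine cancellation is needed; I would single out one factor $\cK_{m+h_1}^*(a)^{\nu_1}$ with $\nu_1$ odd, treat the remaining $s-1$ factors as bounded smooth coefficients, and apply a Type~II bilinear dissection in the style of \cite{BFKMM1, KMS1} to extract oscillation from the resulting character sum over $m$. The main obstacle, in both cases, is that horizontal Sato--Tate for Kloosterman sums is an outstanding open problem: neither equidistribution of $\cK_p(a)/2$ as $p\to\infty$ with $a$ fixed, nor individual moment asymptotics, is currently available, and the crude Weil--Estermann bound \eqref{eq:Weil} alone cannot recover the precise power of $\overline\sfH_a^*(M)/M$ predicted. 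Even the subcase $s=2$, $\nu_1=\nu_2=1$, which asks about $\sum_m \cK_{m+h_1}(a)\cK_{m+h_2}(a)$, appears to lie beyond the reach of Kuznetsov-type spectral methods such as \eqref{eq:Linnik-Selberg-Kuz}, so I expect that genuinely new ingredients, perhaps an averaged Sato--Tate input varying $a$ as well as $m$, will be required to proceed unconditionally.
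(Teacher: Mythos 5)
The statement you were asked to prove is Conjecture~\ref{conj:K-Chowla-II}: the paper does not prove it, but formulates it as an open Kloosterman analogue of the Chowla conjecture and uses it only as a hypothesis, namely in Theorem~\ref{conj:K-S}. So there is no proof in the paper to compare yours against, and your proposal --- which you yourself condition on a horizontal Sato--Tate law that you correctly identify as open --- is a heuristic programme rather than a proof. Your closing assessment is the accurate one: the absence of equidistribution of $\cK_p(a)$ for fixed $a$ as $p$ varies, the lack of unconditional moment asymptotics beyond~\eqref{FM-C2}, and the inaccessibility of even $\sum_{m\le M}\cK_{m+h_1}(a)\cK_{m+h_2}(a)$ to Kuznetsov-type spectral methods are precisely why this is stated as a conjecture.

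Beyond that, two steps of your sketch would fail even if horizontal Sato--Tate were granted. First, in the all-even case, after discarding coincidence primes the local factors attached to distinct shifts $m+h_i$ are indeed supported on disjoint sets of primes, but the average over $m$ of a product of such factors does not thereby factor into a product of single-shift averages; you would need \emph{joint} independence of the Frobenius data at the several shifts, a strictly stronger open hypothesis, not a consequence of single-shift equidistribution. Second, the semicircle-moment computation you invoke points against, not towards, the inequality you want: modelling $\cK^*_m(a)$ for squarefree $m$ as $\prod_{p\mid m}|\cos\theta_p|$ with independent Sato--Tate angles predicts
$$
\frac{1}{M}\sum_{m\le M}\cK^*_m(a)^{2k}\asymp(\log M)^{c_k-1},
\qquad c_k=\frac{2}{\pi}\int_0^{\pi}\cos^{2k}\theta\,\sin^2\theta\,d\theta=\frac{1}{(k+1)4^{k}}\binom{2k}{k},
$$
while~\eqref{FM-C2} forces $\bigl(\overline\sfH^*_a(M)/M\bigr)^{2k}\le(\log M)^{-2k(1-4/(3\pi))+o(1)}$; already for $k=1$ the exponents are $-3/4$ versus at most $-1.15$, so under this very model the ratio of the two sides grows with $M$ and no constant $A$ independent of $M$ can be extracted along your route (a tension that is also worth flagging to the authors, as it bears on the normalisation in the second bullet of the conjecture). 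Finally, the Type~II bilinear machinery of~\cite{BFKMM1,KMS1} operates in the vertical aspect (fixed modulus, varying arguments) and does not transfer to the horizontal sums at hand, where the modulus varies and $a$ is fixed.
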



We note that the second part of Conjecture~\ref{conj:K-Chowla-II} is still nontrivial (which makes
a remarkable difference between Conjecture~\ref{conj:K-Chowla-II} and the  Chowla
conjecture for the  M{\"o}bius function). 

Next, under Conjecture~\ref{conj:K-Chowla-II}, we show that $\sfH_{a,\xi}^*(M) =  o\(\overline\sfH_{a}^*(M)  \)$ for a natural class of 
``low complexity'' sequences.  

First, we  introduce the notion of {\it topological entropy\/} following works of Bowen~\cite{Bow1,Bow2} 
and Dinaburg~\cite{Din}.  

For a compact metric set  $\cX$ and a  homeomorphism  $T$ on it, we consider 
 a topological metric space  $(\cX,T)$. 

We define the  distance on $\cX$ at step $n$ by
\begin{equation}\label{eq:dn-Def}
d_n(x,y) = \max_{0 \le k \le n-1} d(T^{k} x ,T^{k}y).
\end{equation}
We say that a set $\cS \subseteq \cX$ is $(n, \varepsilon)$-separated if for all $x, y \in \cS$ with $x\ne y$, we have $d_n(x,y)\geq \varepsilon$.
Since $\cX$ is compact, a separated set cannot be infinite and we can define $s(n,\varepsilon)$ to be the largest cardinalty of an  $(n, \varepsilon)$-separated set. Similarly, we say that a set $\cR \subseteq \cX$ is an $(n,\varepsilon)$-span set if 
$$
\cX \subseteq \bigcup_{x \in \cR} \sfB_{d_n}(x,\varepsilon), 
$$ 
where $\sfB_{d_n}(x,\varepsilon)$ is the ball  of radius   $ \varepsilon$ with respect to $d_n$: 
$$
 \sfB_{d_n}(x,\varepsilon) = \{y \in \cX:~ d_n(x,y) \le \varepsilon\}
$$ 
By compactness,
there are finite $(n,\varepsilon)$-spanning sets. Let $r(n,\varepsilon)$  be the minimum cardinality of the $(n, \varepsilon)$-spanning sets. This number corresponds to the minimal number of points in $\cX$ such that each orbit of length $n$ can be $\varepsilon$-approximated by the orbit of one of these points. 

It is easy to see that these two numbers are linked by the following inequality for $ n \in \mathbb{N}$ and $\varepsilon>0$:
$$ r(n, \varepsilon/2) \le s(n, \varepsilon) \le r(n,\varepsilon).
$$
Following~\cite{Bow1}, the topological entropy of $T$ is defined by  
$$
h_{\mathrm{top}}(T)= \lim_{\varepsilon \to 0} \limsup_{n \to \infty} \frac{1}{n}\log s(n, \varepsilon)
=\lim_{\varepsilon \to 0} \limsup_{n \to \infty}\frac{1}{n}\log r(n, \varepsilon).
$$ 
We introduce also the notion of dynamical sequence. 
\begin{definition}
\label{def:det func}
	The  function $\xi: \N \to \C$ is said to be deterministic if there exists a dynamical system $(\cX,T)$  of 
	topological entropy  zero, a continuous function $f:\cX \to \C$ and a point $x\in \cX$ such that for all $n \in \mathbb{N}$  we have $\xi(n) = f\(T^n x\)$.
\end{definition}

\begin{theorem}
\label{conj:K-S}   Under Conjecture~\ref{conj:K-Chowla-II},  for any deterministic function   $\xi: \N \to \C$  and a fixed integer $a \ne 0$ we have
$$
\sfH_{a,\xi}^*(M) =  o\(\overline\sfH_{a}^*(M)  \). 
$$
\end{theorem}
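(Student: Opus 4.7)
We adapt Sarnak's scheme for deducing orthogonality against low-complexity sequences from Chowla-type correlation bounds, as formalized in Conjecture~\ref{conj:K-Chowla-II}. First, by uniform continuity of $f$ on the compact metric space $\cX$, for every $\delta>0$ there exists a finite Borel partition $\cX=B_1\sqcup\cdots\sqcup B_K$ and representatives $y_i\in B_i$ such that the step function $\phi:=\sum_i f(y_i)\mathbf{1}_{B_i}$ satisfies $\|f-\phi\|_\infty<\delta$. Setting $\tilde\xi(m):=\phi(T^m x)$, we have $|\xi(m)-\tilde\xi(m)|<\delta$ uniformly in $m$, and hence
\[
\bigl|\sfH^{*}_{a,\xi}(M)-\sfH^{*}_{a,\tilde\xi}(M)\bigr|\le \delta\cdot\overline{\sfH}^{*}_{a}(M).
\]
It thus suffices to prove $\sfH^{*}_{a,\tilde\xi}(M)=o\bigl(\overline{\sfH}^{*}_{a}(M)\bigr)$ for every fixed partition and then let $\delta\to0$.

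Next, I apply the $2k$-th moment method. Expanding $|\sfH^{*}_{a,\tilde\xi}(M)|^{2k}$ and grouping the $2k$-tuples $(m_1,\dots,m_{2k})\in[1,M]^{2k}$ by the set partition they induce---that is, by the number $s$ of distinct values among the $m_i$ and the resulting multiplicity vector $(\nu_1,\dots,\nu_s)$ with $\nu_1+\cdots+\nu_s=2k$---reduces the estimate to controlling weighted shifted moments of the form
\[
\sum_m \Xi_{\vh}(m)\prod_{j=1}^{s} \cK^{*}_{m+h_j}(a)^{\nu_j},
\]
where $\Xi_{\vh}(m)$ is a bounded product of values $\tilde\xi\bigl(T^{m+h_j}x\bigr)$ and their complex conjugates. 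For configurations with all $\nu_j$ even (forcing $s\le k$), the even-case bound of Conjecture~\ref{conj:K-Chowla-II} contributes, after summing over the $O(M^{s-1})$ shift tuples, a total of $O\bigl(\overline{\sfH}^{*}_{a}(M)^{2k}/M^{k}\bigr)$. Configurations with at least one odd $\nu_j$ are where the odd case of the Conjecture supplies the crucial $o$-saving relative to the even-case benchmark.

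Here the zero-topological-entropy hypothesis enters. By the definition~\eqref{eq:dn-Def}, for every $\gamma>0$ and all $N\ge N_0(\gamma)$ the orbit $\{T^mx\}_{m\ge 0}$ admits an $\eps$-spanning set of size at most $e^{\gamma N}$ in the $d_N$-metric; consequently the blocks $\bigl(\tilde\xi(m),\tilde\xi(m+1),\dots,\tilde\xi(m+N-1)\bigr)$ realise at most $e^{\gamma N}$ distinct values as $m$ varies. This subexponential pattern count replaces the naive $M^{s-1}$ tally of off-diagonal shift tuples by a structured, sparse family on which a fixed-shift application of Conjecture~\ref{conj:K-Chowla-II} may be invoked without loss of qualitative decay. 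Summing the contributions yields
\[
\bigl|\sfH^{*}_{a,\tilde\xi}(M)\bigr|^{2k}=o\bigl(\overline{\sfH}^{*}_{a}(M)^{2k}\bigr)+O\bigl(\overline{\sfH}^{*}_{a}(M)^{2k}/M^{k}\bigr).
\]
Since~\eqref{FM-C2} ensures $\overline{\sfH}^{*}_{a}(M)\gg M^{1-o(1)}$ (so the second term is absorbed into the first once $k$ is chosen sufficiently large), extracting the $2k$-th root and then sending $\delta\to 0$ gives the claim.

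The principal obstacle is the \emph{lack of uniformity} of the $o$-term in Conjecture~\ref{conj:K-Chowla-II} as the shift tuple $\vh$ varies: the conjecture is a family of asymptotic statements, one for each fixed $\vh$, and a naive summation over $\vh\in[-M,M]^{s-1}$ destroys the qualitative decay. It is precisely the subexponential orbit complexity furnished by zero topological entropy that confines the relevant shift tuples to a structured thin set on which a uniform-in-$\vh$ application of Conjecture~\ref{conj:K-Chowla-II} is legitimate; calibrating $N=N(M)$, $K=K(M)$ and $\delta=\delta(M)$ so that this pattern count remains compatible with the unspecified decay rate of the $o$-term is the delicate balancing act at the heart of the proof.
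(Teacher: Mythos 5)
Your plan correctly identifies the standard ingredients (a $2k$-th moment expansion fed into the Chowla-type conjecture, plus the zero-entropy hypothesis to control the complexity of $\xi$), and you even correctly diagnose the central difficulty --- the non-uniformity of the $o(\cdot)$ in Conjecture~\ref{conj:K-Chowla-II} over shift tuples. But the resolution you propose does not work, and the gap is genuine. Expanding $|\sfH^*_{a,\tilde\xi}(M)|^{2k}$ directly over $(m_1,\dots,m_{2k})\in[1,M]^{2k}$ creates two problems at once. First, the inner sums you arrive at are \emph{weighted} correlations $\sum_m \Xi_{\vh}(m)\prod_j\cK^*_{m+h_j}(a)^{\nu_j}$ with an $m$-dependent weight $\Xi_{\vh}(m)$ built from values of $\tilde\xi$; Conjecture~\ref{conj:K-Chowla-II} says nothing about such weighted sums, only about the pure correlations. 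Second, the shift tuples range over a set of size polynomial in $M$, and zero topological entropy does not thin this set: entropy bounds the number of distinct \emph{patterns} $\bigl(\tilde\xi(m+1),\dots,\tilde\xi(m+H)\bigr)$ realised on windows of a fixed length $H$, not the set of index tuples appearing in the moment expansion of the full sum over $[1,M]$. Your final paragraph gestures at ``confining the shift tuples to a thin set'' but supplies no mechanism for this, and I do not see one.

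The paper's proof inserts one extra step that dissolves both problems simultaneously: before applying H\"older, replace $\sfH^*_{a,\xi}(M)$ by the double average $\frac1H\sum_{m\le M}\sum_{h=1}^H\cK^*_{m+h}(a)\xi(m+h)+O(H)$ with $H$ a large but \emph{fixed} parameter, and apply H\"older to the inner sum over $h\in[1,H]$ only. The correlations that then emerge have all their shifts in the fixed finite box $[1,H]^{2s}$, so Conjecture~\ref{conj:K-Chowla-II} is invoked only finitely many times as $M\to\infty$ and no uniformity in the shifts is needed. Moreover, the zero-entropy hypothesis is used exactly where it belongs: an $(H,\delta)$-spanning set $\{x_1,\dots,x_t\}$ with $t\ll\exp(\varepsilon^3H)$ lets one replace $f(T^{m+h}x)$ by $f(T^hx_{i_m})$, and a union bound over $i\in\{1,\dots,t\}$ (costing the affordable factor $t$, since $t^{1/s}\le e^{\varepsilon}$ after choosing $s\asymp\varepsilon^2H$) removes the $m$-dependence of the weights entirely, leaving exactly the unweighted correlations the conjecture controls. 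Your step-function approximation of $f$ is harmless but does not substitute for this localisation to fixed-length windows; without it the argument cannot be completed as stated.
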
  

For the proof of Theorem~\ref{conj:K-S}, we follow combinatorial ideas of Sarnak~\cite{Sarnak} and Tao~\cite{Tao}.


As a further evidence of validity of Conjecture~\ref{conj:K-mu}, we now obtain variants of~\eqref{eq:Linnik-Selberg-Kuz} for other weights of arithmetic nature. 
For example, we give a result about 
non-correlation of Kloosterman sums and the characteristic function $\kappa_k(m)$ of $k$th-power 
free numbers (that is, numbers indivisible by $k$th power of a prime), in particular, 
$$
\kappa_2(m) = |\mu(m)|
$$
and the Euler function $\varphi(m)$. In order to state the theorem, we first need to introduce $\vartheta$ as the currently best bound towards the {\it Selberg eigenvalue conjecture\/}. That is the cuspidal spectrum of the (negated) hyperbolic Laplacian on congruence quotients $\Gamma(N) \backslash \mathbb{H}$ is lower bounded by $\frac{1}{4}-\vartheta^2$. Note that we have
$$
\vartheta \le \frac{7}{64}
$$ 
by a result of Kim and Sarnak~\cite{KiSa}, whilst the {\it Selberg eigenvalue conjecture\/} states that one should have $\vartheta = 0$. We now define
\begin{equation}
	\label{eq:gamma}
	\gamma=\max\{1/6,2 \vartheta\}. 
\end{equation}
In particular, we see that $\gamma \le 7/32$.

\begin{theorem}
\label{thm:K k-free}  For any fixed  integer $a$, and $k\ge 2$, 
we have 
$$
\sfH_{a,\kappa_k}(M)  \le M^{1/2+\gamma+o(1)} +  M^{1/2+1/(2k)+o(1)} .
$$
\end{theorem}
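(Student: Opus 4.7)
The plan is to combine Möbius inversion with a Rankin-type split. Writing $\kappa_k(m) = \sum_{d^k \mid m} \mu(d)$ and interchanging summations gives
$$
\sfH_{a,\kappa_k}(M) = \sum_{d \le M^{1/k}} \mu(d) \cR(d), \qquad \cR(d) := \sum_{\substack{m \le M \\ d^k \mid m}} \cK_m(a),
$$
and I would split the outer sum at the threshold $D = M^{1/(2k)}$.

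For the tail $d > D$, the Weil bound $|\cK_m(a)| \ll m^{o(1)}$ from \eqref{eq:Weil} combined with the trivial count $\#\{m \le M :\, d^k \mid m\} \le M/d^k$ gives
$$
\sum_{D < d \le M^{1/k}} |\cR(d)| \ll M^{o(1)} \sum_{d > D} \frac{M}{d^k} \ll \frac{M^{1+o(1)}}{D^{k-1}} = M^{1/2 + 1/(2k) + o(1)},
$$
producing the second term in the claim by the choice of $D$.

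For the head $d \le D$, the key input is a level-aspect Linnik--Selberg--Kuznetsov estimate: using the Kuznetsov trace formula for $\Gamma_0(q)$ with $q = d^k$, together with the Kim--Sarnak bound $\vartheta \le 7/64$ which controls the exceptional cuspidal spectrum, one obtains for fixed $a$
$$
\biggl|\sum_{\substack{c \le X \\ q \mid c}} \frac{S(a,1;c)}{c}\biggr| \ll_\varepsilon (X/q)^{\gamma + \varepsilon} (qX)^\varepsilon,
$$
with $\gamma$ as in \eqref{eq:gamma}. Partial summation, applied to $\cR(d) = \sum_{m \le M, \, d^k \mid m} S(a,1;m)/\sqrt{m}$, then yields $|\cR(d)| \ll (M/d^k)^{1/2 + \gamma + o(1)}$. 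Summing over $d \le D$ and using that $k(1/2 + \gamma) \ge 1$ for $k \ge 2$ (the series $\sum_d d^{-k(1/2+\gamma)}$ converges, and diverges only logarithmically in the borderline case $k=2$, $\gamma=0$, which is absorbed into the $M^{o(1)}$ factor) gives a contribution of $M^{1/2 + \gamma + o(1)}$, the first term in the claim. Adding the two contributions completes the argument.

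The principal obstacle is establishing the level-$q$ Kuznetsov bound above with the precise exponent $\gamma = \max\{1/6, 2\vartheta\}$ arising from the standard trade-off between the Weyl-type contribution (responsible for the $1/6$, as in Kuznetsov's original bound \eqref{eq:Linnik-Selberg-Kuz}) and the exceptional cuspidal contribution (responsible for $2\vartheta$). This is well-trodden ground in the spectral theory of Kloosterman sums (see Deshouillers--Iwaniec, Drappeau, Blomer--Mili\'cevi\'c, and the further references cited in Section~\ref{sec:Rand-K}), but some care is needed to track both the level and the archimedean aspect uniformly; one must also separately dispose of the finitely many $d$ with $\gcd(d,a) > 1$ that may fall outside the hypotheses of the cited Kuznetsov formula, which is done trivially at negligible cost.
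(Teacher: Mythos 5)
Your overall strategy coincides with the paper's: M\"obius inversion $\kappa_k(m)=\sum_{d^k\mid m}\mu(d)$, a split of the $d$-sum at $D=M^{1/(2k)}$, the trivial Weil bound for $d>D$ (your tail computation is correct and matches the paper's), and a level-aspect Kuznetsov estimate for $d\le D$. The gap is in the head, and it is fatal for small $k$. First, the partial summation step is wrong: from your stated input $\sum_{c\le X,\ q\mid c}S(a,1;c)/c\ll (X/q)^{\gamma+\varepsilon}(qX)^{\varepsilon}$, Abel summation against the weight $\sqrt{m}$, which is as large as $\sqrt{M}$ (not $\sqrt{M/q}$), yields only
$$
\cR(d)\ll M^{1/2}\,\bigl(M/d^{k}\bigr)^{\gamma+o(1)},
$$
and not the far stronger $\bigl(M/d^{k}\bigr)^{1/2+\gamma+o(1)}$ that you assert. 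That stronger bound does not follow from any cited spectral estimate and in fact beats what the paper itself proves (Corollary~\ref{cor:K-AP} with $\alpha=0$ gives $M^{1/2+\gamma}q^{-2\gamma+o(1)}$, and $2\gamma<1/2+\gamma$ because $\gamma\le 7/32$), so it cannot be invoked as ``well-trodden ground''.

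Second, once the partial summation is corrected, your quoted spectral input is too weak in the $q$-aspect to give the theorem. The head becomes $M^{1/2+\gamma+o(1)}\sum_{d\le D}d^{-k\gamma}$, and since $k\gamma\le 2\cdot 7/32<1$ for $k=2$, this is of size $M^{1/2+\gamma+o(1)}D^{1-k\gamma}$; with $D=M^{1/(2k)}$ and $k=2$, $\gamma=1/6$ one gets $M^{5/6}$ against the claimed $M^{3/4}$, and even re-optimizing $D$ only yields $M^{4/5}$. What the argument actually needs --- and what the paper supplies as Lemma~\ref{lem:K-AP}, obtained by a more careful treatment of the exceptional spectrum --- is the stronger $q$-dependence
$$
\sum_{\substack{m\le M\\ q\mid m}}\frac{1}{\sqrt m}\,\cK_m(a)\ll \bigl(Mq^{-2}\bigr)^{\gamma}(qM)^{o(1)},
$$
i.e.\ a saving of $q^{-2\gamma}$ rather than $q^{-\gamma}$. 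This turns the head into $M^{1/2+\gamma+o(1)}\bigl(D^{o(1)}+D^{1-2k\gamma}\bigr)$, and the balance at $D=M^{1/(2k)}$ then produces exactly $M^{1/2+\gamma+o(1)}+M^{1/2+1/(2k)+o(1)}$. (Your side remarks are harmless but off: the ``borderline case $\gamma=0$'' cannot occur since $\gamma\ge 1/6$ by \eqref{eq:gamma}, and no separate treatment of $\gcd(d,a)>1$ is required in the form of the Kuznetsov input used here.)
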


Next we show that Kloosterman sums do not correlate with  the Euler function $\varphi(m)$.

\begin{theorem}
\label{thm:K phi}  For any fixed integer $a$,  with $\gamma$ given by~\eqref{eq:gamma},  we have 
$$
\sfH_{a,\varphi}(M)  \le  M^{3/2+\gamma+o(1)}. 
$$
\end{theorem}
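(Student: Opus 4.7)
The natural approach is to Möbius-invert the Euler totient and then appeal to Kuznetsov-type bounds on sums of Kloosterman sums in arithmetic progressions, in the spirit of the Linnik--Selberg estimate~\eqref{eq:Linnik-Selberg-Kuz}. Using the identity $\varphi(m) = \sum_{d \mid m} \mu(d)\, m/d$ and writing $m = de$, one obtains
$$
\sfH_{a,\varphi}(M) = \sum_{d \le M} \mu(d) \sum_{e \le M/d} e\, \cK_{de}(a),
$$
and the factor $\mu(d)$ will only be used to restrict $d$ to squarefree integers: the remaining estimate proceeds by triangle inequality in $d$.

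For each fixed $d$, Abel summation reduces the inner sum to controlling
$$
T_d(Y) = \sum_{e \le Y} \cK_{de}(a) = \sum_{\substack{c \le Yd \\ d \mid c}} \frac{S(a,1;c)}{\sqrt{c}},
$$
where $S(a,1;c)$ is the unnormalised Kloosterman sum, via the estimate $\sum_{e \le Y} e\, \cK_{de}(a) \ll Y\, \sup_{t \le Y} |T_d(t)|$. A further Abel summation reduces bounding $T_d(Y)$ to bounding the companion sum $\sum_{c \le X,\, d \mid c} S(a,1;c)/c$. For this I would invoke the Kuznetsov trace formula on the level-$d$ congruence subgroup, together with the Kim--Sarnak bound $\vartheta \le 7/64$ on exceptional eigenvalues of the hyperbolic Laplacian: Kuznetsov's Weyl-type argument contributes the $X^{1/6+o(1)}$ term, exceptional eigenvalues contribute the $X^{2\vartheta+o(1)}$ term, and together they produce the exponent $\gamma = \max\{1/6,\, 2\vartheta\}$. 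Combined with the natural level-$d$ normalisation, this should yield a bound of the shape
$$
|T_d(Y)| \ll d^{-1/2+\gamma+o(1)}\, Y^{1/2+\gamma+o(1)}.
$$

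Substituting back into the double sum and summing over $d$,
$$
\sfH_{a,\varphi}(M) \ll \sum_{d \le M} \frac{M}{d} \cdot d^{-1/2+\gamma+o(1)} \left(\frac{M}{d}\right)^{1/2+\gamma+o(1)} = M^{3/2+\gamma+o(1)} \sum_{d \le M} d^{-2+o(1)} \ll M^{3/2+\gamma+o(1)},
$$
which is the desired estimate. The main obstacle will be the level-aspect Kuznetsov step: one needs the $d$-dependence of the bound on $\sum_{c \le X,\, d \mid c} S(a,1;c)/c$ to be roughly $d^{-1+o(1)}$, so that the resulting series over $d$ converges. If only a weaker $d$-dependence is available directly from the literature, one can split the range of $d$ and handle large $d$ by Weil-type trivial bounds, which does not affect the final exponent $3/2 + \gamma$.
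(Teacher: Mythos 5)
Your proposal follows essentially the same route as the paper: M\"obius inversion of $\varphi$, reduction by partial summation to sums of normalised Kloosterman sums over moduli divisible by $d$, and a Kuznetsov/Kim--Sarnak bound for those sums in arithmetic progressions (the paper's Lemma~\ref{lem:K-AP} and Corollary~\ref{cor:K-AP}). The one caveat is that your claimed level dependence $|T_d(Y)| \ll d^{-1/2+\gamma+o(1)} Y^{1/2+\gamma+o(1)}$ (equivalently, a full $d^{-1}$ saving in the level for $\sum_{c \le X,\, d \mid c} S(a,1;c)/\sqrt{c}$) is stronger than what the paper actually establishes, namely $(Xd^{-2})^{\gamma}(Xd)^{o(1)}$ for the $1/c$-weighted sum, i.e.\ only $d^{-2\gamma}$ in the level; but, exactly as your closing remark anticipates, this weaker dependence still yields $\sum_{d} d^{-1-2\gamma} < \infty$ and hence the same final exponent $3/2+\gamma$, so no splitting of the $d$-range is even needed.
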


\subsection{New results in the vertical  aspect}
\label{sec:vert} 

First, similarly to~\cite[Definition~1.3]{FKM2}, we say that a vector $(h_1, \ldots, h_s) \in \Z^s$ is {\it normal modulo $p$}
if there is some $h$ such that 
$$
\#\{j:~1 \le j \le s, \ h_j \equiv h \pmod p\} \equiv 1 \pmod 2.
$$

\begin{theorem}
\label{thm:K-LinPoly} 
For a prime $p$ and integer $N < p$,   uniformly over   normal modulo $p$ vectors $(h_1, \ldots, h_s) \in \Z^s$ and polynomials $g \in \R[X]$ of degree $d\ge 0$, we have 
$$
\sum_{n=1}^N  \cK_p(n+h_1)   \ldots  \cK_p(n+h_s) \ee\(g(n)\) \ll N^{1- 2^{-d}}p^{2^{-d-1}}  ( \log p)^{2^{-d}} . 
$$
\end{theorem}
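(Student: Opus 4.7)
The plan is to induct on $d$ via van der Corput--Weyl differencing, reducing the degree of the polynomial phase by one at each step while doubling the number of Kloosterman factors. Throughout I would assume $N^2\ge p(\log p)^2$, as otherwise the trivial bound $|S|\ll_s N$ (from $|\cK_p(a)|\le 2$) already implies the claim.

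\medskip

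For the base case $d=0$, with $g$ constant one estimates $\sum_{n=1}^N F(n)$ where $F(n)=\prod_{j=1}^s \cK_p(n+h_j)$. Completing the sum by expanding $\mathbf 1_{[1,N]}$ as a Fourier series modulo $p$ and using $\sum_{a\in\F_p}|\widehat{\mathbf 1}_{[1,N]}(a)|\ll p\log p$, the task reduces to showing
$$
\left|\sum_{n\in\F_p}F(n)\,\ee_p(an)\right|\ll \sqrt p
$$
uniformly in $a\in\F_p$. For normal vectors this is precisely the Fouvry--Kowalski--Michel input (see \cite[Corollary~1.6]{FKM2}): normality guarantees that the tensor product of translated Kloosterman sheaves has no Artin--Schreier (or trivial) geometric subquotient, so Deligne's Riemann Hypothesis delivers square-root cancellation. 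This yields $|S|\ll\sqrt p\log p = N^{1-2^{0}}p^{2^{-1}}(\log p)^{2^{0}}$, which matches the claimed bound at $d=0$.

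\medskip

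For the inductive step, assuming the bound in degree $d-1$ uniformly over normal shift vectors of length $2s$, I would fix $H\in[1,N]$ and apply van der Corput's inequality:
$$
|S|^2\ll\frac{N}{H}\left(T_0+\sum_{1\le|h|<H}|T_h|\right),\qquad T_h=\sum_n F(n+h)\overline{F(n)}\,\ee\bigl(g(n+h)-g(n)\bigr).
$$
For $h\ne 0$ the new phase has degree $d-1$, and $F(n+h)\overline{F(n)}$ corresponds to the concatenated shift vector $(h_1+h,\ldots,h_s+h,h_1,\ldots,h_s)$ of length $2s$. The key combinatorial check is that this concatenated vector remains normal modulo $p$: with $m_c=\#\{i:h_i\equiv c\pmod p\}$, the new multiplicities modulo $p$ are $m_c+m_{c-h}$; if these were all even, then the $\{0,1\}$-valued function $c\mapsto m_c\bmod 2$ would be invariant under $c\mapsto c-h$, hence (since $h\not\equiv 0\pmod p$ generates $\F_p$ additively and $p>s$) identically zero, contradicting the normality of $(h_1,\ldots,h_s)$. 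Because $|h|<H\le N<p$, the conditions $h\ne 0$ and $h\not\equiv 0\pmod p$ coincide, so the induction hypothesis applies to each nondiagonal $T_h$, while $T_0\ll N$ by Weil. Substituting and choosing $H\asymp N^{2^{1-d}}p^{-2^{-d}}(\log p)^{-2^{1-d}}$ to balance the diagonal and off-diagonal contributions (which lies in $[1,N]$ precisely in the nontrivial range $N^2\gg p(\log p)^2$) gives
$$
|S|^2\ll N^{2-2^{1-d}}p^{2^{-d}}(\log p)^{2^{1-d}},
$$
the square of the desired estimate.

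\medskip

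The main obstacle is not the induction, which is mechanical once the preservation of normality is observed, but rather the base case: the uniform square-root bound for the completed Kloosterman product twisted by an additive character is a genuinely deep input, relying on Deligne's Riemann Hypothesis together with the geometric verification that normal shift vectors produce tensor-product sheaves free of Artin--Schreier summands. Any weakening of the normality hypothesis collapses this input and breaks the entire scheme at its foundation.
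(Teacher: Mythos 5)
Your proof is correct and follows essentially the same route as the paper: induction on $d$ via Weyl/van der Corput differencing, with the deep input being the Fouvry--Kowalski--Michel square-root cancellation for the complete sum, completed to $\ll p^{1/2}\log p$ in the base case $d=0$. The only deviations are minor refinements that do not change the method or the final exponents: your parity/orbit argument shows the concatenated shift vector stays normal for \emph{every} $h\not\equiv 0\pmod p$, where the paper instead discards the $O(s^2)$ shifts $\ell$ congruent to some difference $h_i-h_j$ and bounds those terms trivially, and your optimized truncation $H$ plays the same role as the paper's untruncated squaring followed by the observation that the leftover $N^{1/2}$ term is dominated once $N\ge p^{1/2}$.
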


We also have the following unconditional analogue of Theorem~\ref{conj:K-S}

\begin{theorem}\label{thm:VertCorr-Incomp-II} Let $\psi(z)$ be a fixed arbitrary real function with $\psi(z) \to \infty$ 
as $z\to \infty$. For any dynamical system $(\cX,T)$ with zero topological entropy, for any continuous function $f:\cX \to \C$  and any $x \in \cX$, for a prime $p$, for 
an integer $N$ with $p > N \ge \sqrt{p} \psi(p) $  we have
	$$\sum_{n=1}^{N} \cK_p(n) f\(T^nx\)=o(N) $$
as $p\to \infty$.  
\end{theorem}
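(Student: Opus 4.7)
My plan follows the Sarnak paradigm: apply the Bourgain--Sarnak--Ziegler (BSZ) disjointness criterion to reduce orthogonality with an arbitrary zero-entropy deterministic sequence to a quantitative two-correlation estimate for $\cK_p$. Since $|\cK_p(n)|\le 2$ by the Weil bound (as $p$ is prime), the function $\cK_p/2$ is a bounded sequence on $\N$ to which BSZ applies. The zero topological entropy of $(\cX,T)$ enters only through this reduction; no further structural theorem on the dynamics is required beyond its definition via $(n,\varepsilon)$-separated sets.

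In its quantitative finitary form (permitting $p$-dependence of the bounded sequence), BSZ reduces the theorem to the statement
\[
\frac{1}{N'}\sum_{n\le N'}\cK_p(\ell_1 n)\,\cK_p(\ell_2 n)\;\longrightarrow\;0 \quad (p\to\infty),
\]
uniformly for every pair of distinct primes $\ell_1,\ell_2$ (both coprime to $p$ for $p$ large) and every $N'\le N$. To verify this, I substitute $m\equiv\ell_1 n\pmod p$, so the sum becomes $\sum_{m\in\cS}\cK_p(m)\cK_p(cm)$ with $c\equiv\ell_2\ell_1^{-1}\not\equiv 1\pmod p$ and $\cS\subset\Z_p$ an arithmetic progression of length $N'$. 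Opening $\mathbf 1_\cS$ in a Fourier series modulo $p$, the zero frequency contributes $O(N'/p)$ via the orthogonality identity $\sum_{a\in\Zpx}\cK_p(a)\cK_p(ca)=-1$ (valid for $c\not\equiv 1\pmod p$), while each non-zero frequency $r$ yields a linearly twisted Kloosterman product $\sum_m \eep(rm)\cK_p(m)\cK_p(cm)$, bounded by $O(\sqrt p)$ by Weil (this is a polynomial-phase variant of the $s=2$ case of Theorem~\ref{thm:K-LinPoly}). P\'olya--Vinogradov aggregation over $r$ then gives
\[
\Bigl|\sum_{n\le N'}\cK_p(\ell_1 n)\cK_p(\ell_2 n)\Bigr|\;\ll\;\frac{N'}{\sqrt p}+\sqrt p\,\log p,
\]
so the normalised correlation is $\ll 1/\sqrt p+(\log p)/\psi(p)=o(1)$ as soon as $\psi(p)\gg\log p$.

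The main obstacle is the logarithmic factor: when $\psi(p)$ grows slower than $\log p$, a single-pair estimate is insufficient. One absorbs the loss by passing to the averaged form of BSZ, where the correlation need only be small \emph{on average} over a growing set of prime pairs $(\ell_1,\ell_2)\in\Pi\times\Pi$ with $|\Pi|\to\infty$; the Tur{\'a}n--Kubilius type inequality underlying BSZ then distributes the $\log p$ loss over the density of $\Pi$, while the contribution of pairs with $\ell_1=\ell_2$ (the ``diagonal'') is $O(N/|\Pi|) = o(N)$. This yields $\sum_{n\le N}\cK_p(n)\,f(T^nx)=o(N)$ uniformly in $x\in\cX$ and continuous $f$, completing the proof.
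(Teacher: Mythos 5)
Your route via the Bourgain--Sarnak--Ziegler criterion is genuinely different from the paper's, which instead runs the Tao-style entropy argument directly: cover the orbit space by $\exp(\varepsilon^3 H)$ balls, average over additive shifts $h\le H$, apply H\"older, and feed in the additive-shift correlation bound of Corollary~\ref{cor:H-Corr-Incomp}. The difference that matters is quantitative, and it is exactly where your argument has a genuine gap. Your completion/P\'olya--Vinogradov treatment of $\sum_{n\le N'}\cK_p(\ell_1 n)\cK_p(\ell_2 n)$ gives $O(N'/p)+O(\sqrt p\log p)$, hence a normalised correlation $\ll \tau\log p/\psi(p)$ once you account for $N'\ge N/\tau$. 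For an \emph{arbitrary} $\psi\to\infty$ (say $\psi(p)=\log\log\log p$) this is not $o(1)$, as you note. But the proposed repair does not work: the $\sqrt p\log p$ loss is intrinsic to each individual pair $(\ell_1,\ell_2)$ --- it comes from completing an incomplete sum of length $N'$ --- and it is the \emph{same} for every pair, so averaging over a growing set $\Pi$ of primes cannot remove it. After the Cauchy--Schwarz step in BSZ the off-diagonal correlations enter through their absolute values, so there is no mechanism for cancellation among pairs; the Tur\'an--Kubilius input only controls the multiplicity of the factorisations $n=\ell m$ and the diagonal contribution $O(N/\#\Pi)$, not the size of the off-diagonal terms.

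What is actually needed, and what the paper supplies, is an amplification of the incomplete correlation sum that beats completion in the range $N\asymp\sqrt p\,\psi(p)$: Corollary~\ref{cor:H-Corr-Incomp} gives $\ll N^{1/2}p^{1/4}=N/\sqrt{\psi(p)}=o(N)$ by shifting $n\mapsto n+k$, applying Cauchy--Schwarz, and only then completing. You could in principle run the same shift-and-Cauchy amplification on your multiplicative correlations $\sum_{n\le N'}\cK_p(\ell_1 n)\cK_p(\ell_2 n)$ (note that the complete twisted sums $\sum_{m}\eep(rm)\cK_p(m)\cK_p(cm)\ll\sqrt p$ needed there follow from the Fouvry--Kowalski--Michel sums-of-products machinery for multiplicative dilates, not from Theorem~\ref{thm:K-LinPoly}, which treats additive shifts only), and then choose $\tau\to\infty$ more slowly than $\psi$; that would salvage the BSZ route. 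As written, however, the final step asserting that the averaged form of BSZ absorbs the logarithm is a gap, not a proof.
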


\section{Preliminaries} 

\subsection{Sums of Kloosterman sums in the horizontal  aspect}

We start by considering the special case of sums of Kloosterman sums along moduli which are multiples of a given integer $q \ge 1$. For $q=1$, cancellation along such sums were first conjectured by 
Selberg~\cite{Kloos-Selberg} and Linnik~\cite{Kloos-Linnik} (independently) and later proven by Kuznetsov~\cite{Kuz} with the eponymous trace formula. For general $q$, adaptations of the Kuznetsov trace formula and as well as cancellations along these sums have been demonstrated by, for example, Deshouillers and Iwaniec~\cite{DesIw}. For more uniform versions, the reader may wish to consult~\cite{GaSe, GaSe-Corr, SarTsim, Stein-Twist-Linnik} and for more general arithmetic progressions~\cite{BlMil, Drap, KiralYoung}.

Here, we slightly improve the $q$ dependence in the bound for the sums of Kloosterman sums along moduli divisible by $q$. We achieve this by applying a more optimised treatment of the exceptional spectrum. Otherwise, we follow the arguments of prior works \cite{DesIw,GaSe,GaSe-Corr,Stein-Twist-Linnik} almost verbatim.


It is useful to record the following trivial bound (that is, when one forgoes any cancellation in the sum over $m$)
\begin{equation}
\label{eq:Kloos-sums-triv}
\sum_{\substack{m =1\\ m\equiv 0 \pmod q}}^M \frac{1}{m^{1/2}}  \Kma \ll 
M^{1/2+o(1)} q^{-1}, 
\end{equation}
implied by a direct application of the Weil bound~\eqref{eq:Weil}.

\begin{lemma}\label{lem:K-AP} 
For a fixed non-zero integer $a$ and a positive integer $q$, with $\gamma$ given by~\eqref{eq:gamma},  
we have 
$$\sum_{\substack{m =1\\ m\equiv 0 \pmod q}}^M \frac{1}{m^{1/2}}  \Kma \ll 
\(Mq^{-2}\)^\gamma (qM)^ {o(1)}, 
$$
\end{lemma}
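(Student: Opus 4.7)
The plan is to apply the Kuznetsov trace formula at level $q$ (on the Hecke congruence subgroup $\Gamma_0(q)$) and to balance the tempered and exceptional spectral contributions carefully. Using $\cK_m(a) = m^{-1/2} S(a,1;m)$ together with a smooth dyadic partition of unity, the problem reduces to estimating, for each dyadic scale $X \le M$,
\[
\Sigma(X) = \sum_{\substack{c \equiv 0 \pmod{q}}} \phi(c/X)\,\frac{S(a,1;c)}{c},
\]
with $\phi$ smooth and supported in $[1/2,1]$. This is precisely the shape of weighted Kloosterman average treated in Deshouillers--Iwaniec~\cite{DesIw} and its uniform-in-$q$ refinements~\cite{GaSe, GaSe-Corr, SarTsim, Stein-Twist-Linnik}, and I would follow those arguments almost verbatim up to the point where the exceptional spectrum enters. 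The trivial bound~\eqref{eq:Kloos-sums-triv} serves as the benchmark to beat.

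Kuznetsov's formula for $\Gamma_0(q)$ converts $\Sigma(X)$ into a spectral sum over cuspidal Maass forms of level $q$, together with Eisenstein and holomorphic contributions, each weighted by a Bessel transform $\widetilde{\phi}(t_j)$ of $\phi$. The transform decays rapidly once $|t_j|$ exceeds a polylogarithmic threshold, so only a thin strip of the spectrum contributes effectively. I would split the Maass spectrum into the tempered part $t_j \in \R$ and the exceptional part $t_j = i r_j$ with $0 < r_j \le \vartheta$. For the tempered piece, the Deshouillers--Iwaniec large sieve inequalities for Fourier coefficients at level $q$, combined with Weyl's law at level $q$, yield a contribution of shape $(X/q^2)^{1/6}(qX)^{o(1)}$; this accounts for the $1/6$ alternative appearing in $\gamma$. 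The Eisenstein and holomorphic pieces are handled analogously and do not exceed this bound.

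The main obstacle, and the place where the authors' refinement is actually needed, is the exceptional spectrum. On it one has $\widetilde{\phi}(ir_j) \ll X^{2r_j}$, so a single eigenvalue with $r_j$ close to $\vartheta$ can in principle give a contribution of order $X^{2\vartheta}$. The $q$-dependence is subtle here because exceptional Fourier coefficients can be inflated by oldform contributions at higher level. The improvement is to combine an exceptional-spectrum variant of the Deshouillers--Iwaniec large sieve with Hoffstein--Lockhart-type lower bounds on the arithmetic normalising factor of the relevant Maass forms, so that the $q$-dependence in the exceptional contribution comes out as the clean factor $q^{-2\gamma}$ rather than a weaker power of $q$. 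The Kim--Sarnak bound $\vartheta \le 7/64$ then yields $(X/q^2)^{2\vartheta}(qX)^{o(1)}$ on this piece, and taking the maximum over the two spectral regimes gives $(X/q^2)^{\gamma}(qX)^{o(1)}$. Summing over the $O(\log M)$ dyadic scales $X \le M$ and undoing the normalisation $\cK_m(a)/\sqrt{m} = S(a,1;m)/m$ yields the claimed bound.
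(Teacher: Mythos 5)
Your proposal follows essentially the same route as the paper: reduce to dyadic/smoothed sums of $S(a,1;c)/c$ over $c\equiv 0\pmod q$, run the Deshouillers--Iwaniec/Kuznetsov machinery at level $q$ as in \cite{DesIw,GaSe,GaSe-Corr,Stein-Twist-Linnik}, and isolate the exceptional spectrum, bounding it by $(Mq^{-2})^{2\vartheta}(qM)^{o(1)}$ via a density estimate so that the maximum of the two regimes produces the exponent $\gamma$ of \eqref{eq:gamma}. The only cosmetic differences are that in the paper the $1/6$ arises from balancing the sharp-versus-smooth cutoff error $T/(qM^{1/2})$ against the non-exceptional spectral term $M^{1/2}/T^{1/2}$ (with $T=0.5(qM)^{2/3}$, which also forces the preliminary reduction to $M>q^2$ via the trivial bound \eqref{eq:Kloos-sums-triv}), and the exceptional $q$-saving is obtained from $|\rho_j(a)|\ll|\rho_j(1)|$ together with Iwaniec's density estimate rather than a Hoffstein--Lockhart-type input.
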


\begin{proof}  We observe that the trivial bound~\eqref{eq:Kloos-sums-triv} is stronger if $M \le q^2$. 
Hence, for the remainder of the proof, we always assume 
\begin{equation}
\label{eq:Large M}
M > q^2. 
\end{equation} 
It is sufficient to treat dyadic sums 
$$
S_\text{dyad}(M) = \sum_{\substack {M \le m < 2M\\q \mid m}} \frac{1}{m^{1/2}}\Kma, 
$$
which we compare  to smooth sums 
 \begin{equation}
\label{eq:Kloos-sums-smooth}
S_\text{smooth}(M) =
\sum_{m \equiv 0 \pmod q}\frac{1}{m^{1/2}} \Kma \, g\!\left(\frac{4\pi \sqrt{|a|}}{m}\right),
\end{equation}
where $g$ is a smooth bump function with
\begin{itemize}
	\item $g(x) =1$ for 
	$$\frac{2 \pi \sqrt{|a|}}{M} \le x \le \frac{4 \pi \sqrt{|a|}}{M},
	$$
	\item $g(x) = 0$ for 
	$$x \le \frac{2 \pi \sqrt{|a|}}{M+T}\qquad \text{or} \qquad 
	\frac{4 \pi \sqrt{|a|}}{M-T} \ge x,
	$$
	\item with $L_1$-norms satisfying 
	$$\|g'\|_1 \ll 1 \mand \|g''\|_1 \ll \frac{C^2}{\sqrt{|a|} T},
	$$
\end{itemize}
for some parameter $1 \le T \le M/2$. The Weil bound~\eqref{eq:Weil} shows that 
$$
\left|S_\text{dyad}(M)  - S_\text{smooth}(M)\right |  \ll M^{-1/2}(1+T/q)(Mq)^{o(1)}, 
$$
 see~\cite[Equation~(3.2)]{Stein-Twist-Linnik}.  
 
The smooth sum~\eqref{eq:Kloos-sums-smooth}, 
following the analysis~\cite[pp.~264--268]{DesIw},  may be bounded as 
$$
S_\text{smooth}(M) \ll \frac{M^{1/2}}{T^{1/2}}+\sum_{j, \text{ exc.}} |\rho_j(a)\rho_j(1)| M^{2|t_j|},
$$
where the sum is over an orthonormal basis of exceptional Maass forms of $\Gamma_0(q) \backslash \mathbb{H}$ with Fourier expansion
$$
y^{1/2} \sum_{n \neq 0} \rho_j(n) K_{i t_j}(2 \pi |n|y) e(nx) ,\qquad x+iy \in \mathbb{H}.
$$
We refer to~\cite{DesIw, Iwspec} for a background and standard notations. 
This exceptional term may be further bounded using
$$
|\rho_j(a)| \ll |a|^{1/2} |\rho_j(1)| \ll |\rho_j(1)|
$$
and also a density estimate for exceptional eigenvalues, see~\cite[Equation~(11.25)]{Iwspec}, along the lines of~\cite[Sections~2.1]{FM2} and~\cite[Lemma~2.10]{Topa}:
$$\begin{aligned}
\sum_{j, \text{ exc.}} |\rho_j(a)\rho_j(1)| M^{2|t_j|} &\ll (Mq^{-2})^{2\vartheta} \sum_{j, \text{ exc.}} |\rho_j(1)|^2 q^{4|t_j|} \\
&\ll (Mq^{-2})^{2\vartheta}q^{o(1)}.
\end{aligned}$$
In conclusion,
$$
S_\text{dyad}(M)  \ll \left(\frac{T}{qM^{1/2}} + \frac{M^{1/2}}{T^{1/2}} + (Mq^{-2})^{2\vartheta} \right) (qM)^{o(1)}.
$$
The choice $T  = 0.5 (qM)^{2/3}$  (which due to~\eqref{eq:Large M} ensures 
that $1 \le T \le M/2$, as required) gives the bound 
$$
S_\text{dyad}(M)  \ll \left(M^{1/6}q^{-1/3} + (Mq^{-2})^{2\vartheta} \right) (qM)^{o(1)}.
$$
The desired conclusion follows upon recalling the restriction~\eqref{eq:Large M}.
\end{proof}

Via partial summation, we further derive from Lemma~\ref{lem:K-AP} 

\begin{cor}\label{cor:K-AP} 
For a fixed non-zero integer $a$ and a positive integer $q$, with $\gamma$ given by~\eqref{eq:gamma},  we have for any fixed $\alpha \ge -1/2$ 
$$\sum_{\substack{m =1\\ m\equiv 0 \pmod q}}^M m^{\alpha} \Kma \ll 
M^{\alpha+1/2} \(Mq^{-2}\)^{\gamma}  (Mq)^ {o(1)}. 
$$
\end{cor}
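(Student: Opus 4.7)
The plan is a standard Abel summation reduction, turning Corollary~\ref{cor:K-AP} into a direct consequence of Lemma~\ref{lem:K-AP} together with the observation that the weight $m^\alpha$ is mild enough for the boundary term to dominate. Concretely, set
$$
S(t) = \sum_{\substack{m \le t \\ m \equiv 0 \pmod q}} m^{-1/2}\Kma,
$$
so that by Lemma~\ref{lem:K-AP} we have $|S(t)| \ll (tq^{-2})^{\gamma}(qt)^{o(1)}$ for $t \ge q$, and $S(t) = 0$ for $t < q$. Write $m^{\alpha} = m^{\alpha+1/2} \cdot m^{-1/2}$ and apply partial summation.

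The resulting identity reads
$$
\sum_{\substack{m \le M \\ m \equiv 0 \pmod q}} m^{\alpha}\Kma = M^{\alpha+1/2} S(M) - \bigl(\alpha + \tfrac{1}{2}\bigr)\int_q^M S(t)\, t^{\alpha - 1/2}\, dt.
$$
The boundary term is estimated directly from Lemma~\ref{lem:K-AP} as
$$
M^{\alpha+1/2} S(M) \ll M^{\alpha+1/2}(Mq^{-2})^{\gamma}(qM)^{o(1)},
$$
which is exactly the target bound. It remains to check that the integral contributes no more.

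Plugging the lemma into the integral gives
$$
\int_q^M |S(t)|\, t^{\alpha - 1/2}\, dt \ll q^{-2\gamma}(qM)^{o(1)} \int_q^M t^{\gamma + \alpha - 1/2}\, dt.
$$
Since $\gamma \ge 1/6 > 0$ by definition~\eqref{eq:gamma} and $\alpha \ge -1/2$ by hypothesis, the exponent $\gamma + \alpha - 1/2 > -1$, so the integral is dominated by the value at the upper endpoint, yielding $\ll M^{\gamma + \alpha + 1/2} q^{-2\gamma}(qM)^{o(1)} = M^{\alpha+1/2}(Mq^{-2})^{\gamma}(qM)^{o(1)}$. Combining the two contributions finishes the proof. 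There is no genuine obstacle here: the only point one must watch is that the range of integration does not reach below $t=q$ (where $S(t)$ vanishes, so no contribution) and that the hypothesis $\alpha \ge -1/2$ plus the positivity of $\gamma$ ensures the integral is controlled by its upper endpoint rather than producing a logarithmic or divergent lower-endpoint contribution.
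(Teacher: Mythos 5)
Your argument is correct and is exactly the route the paper intends: the corollary is derived from Lemma~\ref{lem:K-AP} ``via partial summation,'' and your Abel-summation identity, boundary-term estimate, and verification that the exponent $\gamma+\alpha-1/2\ge -5/6>-1$ keeps the integral controlled by its upper endpoint supply precisely the details the paper omits. Nothing is missing.
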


\subsection{Sums of Kloosterman sums in the vertical aspect}

We start with a  result which a special case of  Fouvry,  Kowalski and  Michel~\cite[Corollary~1.6]{FKM2}.
We also recall the definition of normal vectors from Section~\ref{sec:vert}.

\begin{lemma}\label{lem:H-Corr} 
For a prime $p$, for any integer $b$,  uniformly over   normal modulo $p$ vectors $(h_1, \ldots, h_s) \in \Z^s$,  we have
$$
\left|  \sum_{n=1}^p    \cK_p(n+h_1)   \ldots  \cK_p(n+h_s) \eep(b n) \right|
\ll p^{1/2} .
$$
\end{lemma}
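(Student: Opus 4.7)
The strategy is to reduce the lemma to a square-root cancellation bound for an algebraic exponential sum and then invoke Deligne's $\ell$-adic machinery together with Katz's analysis of Kloosterman sheaves.

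First, I would expand each normalised Kloosterman sum via its definition
$$
\cK_p(n+h_j) = \frac{1}{\sqrt{p}} \sum_{x_j \in \Z_p^*} \eep\!\bigl((n+h_j) x_j + \bar x_j\bigr),
$$
exchange the order of summation, and execute the sum over $n \in \Z_p$. This produces a Kronecker delta enforcing $x_1 + \cdots + x_s + b \equiv 0 \pmod p$, so the left-hand side collapses to
$$
p^{1 - s/2} \sum_{\substack{x_1, \ldots, x_s \in \Z_p^* \\ x_1 + \cdots + x_s \equiv -b \pmod p}} \eep\!\Biggl( \sum_{j=1}^{s} \bigl(h_j x_j + \bar x_j\bigr) \Biggr).
$$
It therefore suffices to show that this exponential sum, living on an $(s-1)$-dimensional affine subvariety of $(\Z_p^*)^s$, is $O\!\left(p^{(s-1)/2}\right)$.

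Second, I would recognise the original function $n \mapsto \prod_{j=1}^s \cK_p(n+h_j)\, \eep(bn)$ as the Frobenius trace function of the $\ell$-adic middle-extension sheaf
$$
\mathcal{F} = \mathcal{L}_{\psi(bX)} \otimes \bigotimes_{j=1}^{s} [+h_j]^{*}\mathrm{Kl}_2,
$$
where $\mathrm{Kl}_2$ is Deligne's Kloosterman sheaf on the punctured affine line, $[+h_j]$ denotes translation by $h_j$, and $\mathcal{L}_{\psi(bX)}$ is the Artin--Schreier sheaf attached to $n \mapsto \eep(bn)$. Each factor is pointwise pure of weight $0$, lisse on a cofinite open, tamely ramified at $0$, and wildly ramified at $\infty$ with Swan conductor $1$. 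Consequently the analytic conductor of $\mathcal{F}$ is bounded in terms of $s$ alone, independently of $p$, $b$ and $(h_1, \ldots, h_s)$.

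Third, I would verify that $\mathcal{F}$ has no geometrically trivial summand, which is the crucial hypothesis in the square-root cancellation estimate of Fouvry--Kowalski--Michel. By Katz's determination of the geometric monodromy of $\mathrm{Kl}_2$ (as $\mathrm{SL}_2$) and his analysis of isomorphisms among translates of $\mathrm{Kl}_2$, the inner tensor product contains a geometrically trivial summand precisely when every residue class $h \pmod p$ occurs an even number of times among $h_1, \ldots, h_s$---exactly what the normality hypothesis negates. The twist by $\mathcal{L}_{\psi(bX)}$ cannot introduce such a summand either, since the resulting sheaf remains wildly ramified at $\infty$. Granted this, Deligne's Weil~II in the effective form of~\cite{FKM2} yields
$$
\Bigl| \sum_{n \in \Z_p} t_{\mathcal{F}}(n) \Bigr| \ll_{s} p^{1/2},
$$
which is the claimed bound. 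The main obstacle is the monodromy step: verifying that no collapse occurs in the tensor product of translated Kloosterman sheaves under the normality hypothesis; the remaining ingredients are formal once this algebro-geometric input is in place.
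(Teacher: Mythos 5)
The paper offers no independent proof of this lemma: it is recorded as a special case of Fouvry--Kowalski--Michel~\cite[Corollary~1.6]{FKM2}, which is exactly the trace-function, sums-of-products machinery you invoke in your second and third steps, with the normality hypothesis ruling out a geometrically trivial summand in $\bigotimes_j [+h_j]^{*}\mathrm{Kl}_2$ precisely as you describe. Your first step (summing over $n$ to reduce to a complete sum on the hyperplane $x_1+\cdots+x_s\equiv -b$) is a detour you neither need nor use, so apart from that your proposal coincides with the argument behind the cited result.
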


Using the standard completion technique, see~\cite[Section~12.2]{IwKow}, 
we immediately derive from Lemma~\ref{lem:H-Corr} that for a prime $p$ and integer $N \ge 1$,  uniformly over   normal modulo $p$ vectors $(h_1, \ldots, h_s) \in \Z^s$, where $s$ is a fixed integer, we have
\begin{equation}\label{eq:Compl}
\left|  \sum_{n=1}^N    \cK_p(n+h_1)   \ldots  \cK_p(n+h_s)  \right|
\ll p^{1/2}  \log p. 
\end{equation}
However, we  need a different bound which is usually weaker than~\eqref{eq:Compl}
but instead is nontrivial for smaller values of $N$.

\begin{cor}\label{cor:H-Corr-Incomp} 
For a prime $p$ and integer $N <p$,  uniformly over   normal modulo $p$ vectors $(h_1, \ldots, h_s) \in \Z^s$, where $s$ is a fixed integer, 
 we have
$$
\left|  \sum_{n=1}^N    \cK_p(n+h_1)   \ldots  \cK_p(n+h_s)  \right|
\ll N^{1/2} p^{1/4}. 
$$
\end{cor}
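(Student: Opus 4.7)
The plan is to combine the full-period correlation bound of Lemma \ref{lem:H-Corr} with a van der Corput style shifting argument, in order to improve upon the completion estimate \eqref{eq:Compl} in the range where $N$ is smaller than $p^{1/2}(\log p)^2$. First observe that if $N \le p^{1/2}$, then the Weil bound gives $|\cK_p(n+h_i)| \ll 1$, hence trivially
$$
\left|\sum_{n=1}^N \cK_p(n+h_1) \cdots \cK_p(n+h_s)\right| \ll N \le N^{1/2} p^{1/4},
$$
and we are done. So we may assume $p^{1/2} < N < p$.

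Put $F(n) = \cK_p(n+h_1) \cdots \cK_p(n+h_s)$; this function is real (as each $\cK_p$ is real), $p$-periodic, and bounded. For a parameter $H$ with $1 \le H < p$ to be chosen, and any $1 \le h \le H$, we have $\sum_{n=1}^N F(n) = \sum_{n=1}^N F(n+h) + O(H)$. Averaging over $h$ and applying Cauchy--Schwarz, the sum $S = \sum_{n=1}^N F(n)$ satisfies
$$
H^2 |S|^2 \ll N \sum_{n=1}^p \left| \sum_{h=1}^H F(n+h) \right|^2 + H^4.
$$
Expanding the inner square using that $F$ is real, one gets $\sum_{h_1, h_2 = 1}^H \sum_{n=1}^p F(n+h_1) F(n+h_2)$. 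The diagonal $h_1 = h_2$ contributes $\ll Hp$. For $h_1 \neq h_2$, substitute $m = n + h_1$ to obtain a product of $2s$ shifted Kloosterman sums with shift vector $(h_1, \ldots, h_s, h_1 + d, \ldots, h_s + d)$, where $d = h_2 - h_1 \not\equiv 0 \pmod{p}$.

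The crux is checking that this enlarged shift vector is still normal modulo $p$ so Lemma \ref{lem:H-Corr} applies with $b = 0$. Denote by $a_r$ the multiplicity of residue $r \in \Z/p\Z$ in $(h_1, \ldots, h_s)$; then the multiplicity of $r$ in the enlarged vector is $a_r + a_{r-d}$. If all of these were even, then $a_r \equiv a_{r-d} \pmod 2$ for every $r$, and since $d \neq 0$ generates $\Z/p\Z$ additively, the function $r \mapsto a_r \bmod 2$ would be constant. As $\sum_r a_r = s$ and $s < p$ for $p$ sufficiently large, this constant must equal $0$, contradicting the normality of $(h_1, \ldots, h_s)$. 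Thus each off-diagonal inner sum is $O(p^{1/2})$, contributing $\ll H^2 p^{1/2}$ in total, and we arrive at
$$
|S|^2 \ll \frac{Np}{H} + Np^{1/2} + H^2.
$$
Choosing $H \asymp (Np)^{1/3}$ (which fits within $[1, p^{2/3}]$) balances the first and third terms at $(Np)^{2/3}$, and the assumption $N \ge p^{1/2}$ ensures $(Np)^{2/3} \le Np^{1/2}$, yielding $|S| \ll N^{1/2} p^{1/4}$ as claimed. The main obstacle is the normality verification in the off-diagonal step; once this is secured, the optimisation is routine.
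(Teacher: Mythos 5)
Your proof is correct and follows essentially the same route as the paper's: a van der Corput shift, Cauchy--Schwarz with extension of the $n$-sum to a full period, and Lemma~\ref{lem:H-Corr} applied to the length-$2s$ shift vector, with an equivalent optimisation of the shift parameter ($H\asymp (Np)^{1/3}$ in place of the paper's $K=\lceil p^{1/2}\rceil$). The only real difference is in the off-diagonal terms: the paper discards the $O(s^2K)$ pairs where two entries of the enlarged vector may coincide and bounds those complete sums trivially by $p$, whereas your parity argument (using that $d\not\equiv 0\pmod p$ generates $\Z/p\Z$ additively, so all multiplicities even would force $(h_1,\ldots,h_s)$ itself to be non-normal) shows the enlarged vector is normal for \emph{every} off-diagonal shift --- a slightly cleaner observation that yields the same final bound, since the diagonal already contributes $Hp$.
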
 

\begin{proof} Clearly for any integer $K\ge 1$ we have 
\begin{equation}\label{eq:Shift}
  \sum_{n=1}^N    \cK_p(n+h_1)   \ldots  \cK_p(n+h_s) =   \frac{1}{K} W + O(K) , 
 \end{equation}
 where
\begin{align*}
W & =  \sum_{k=1}^K \sum_{n=1}^N    \cK_p(n+k+h_1)   \ldots  \cK_p(n+k+h_s) \\
& = \sum_{n=1}^N    \sum_{k=1}^K   \cK_p(n+k+h_1)   \ldots  \cK_p(n+k+h_s) .  
\end{align*}  
By the Cauchy inequality, 
\begin{align*}
|W|^2 & \le N \sum_{n=1}^N \left |   \sum_{k=1}^K   \cK_p(n+k+h_1)   \ldots  \cK_p(n+k+h_s) \right|^2\\
 & \le N \sum_{n=1}^p \left |  \sum_{k=1}^K   \cK_p(n+k+h_1)   \ldots  \cK_p(n+k+h_s) \right|^2 \\
 & =  N \sum_{n=1}^p  \sum_{k, \ell =1}^K   \cK_p(n+k+h_1)   \ldots  \cK_p(n+k+h_s)\\
 & \qquad \qquad \qquad  \qquad  \qquad   \qquad  \cK_p(n+\ell+h_1)   \ldots  \cK_p(n+\ell+h_s)\\
  & =  N  \sum_{k, \ell =1}^K    \sum_{n=1}^p \cK_p(n+k+h_1)   \ldots  \cK_p(n+k+h_s)\\
 & \qquad \qquad \qquad  \qquad  \qquad   \qquad  \cK_p(n+\ell+h_1)   \ldots  \cK_p(n+\ell+h_s).
 \end{align*}
For $K<p$, there are at most $s^2 K$ pairs $(k,\ell)$ with $k-\ell \equiv h_i - h_j \mod(p)$ for some $1\le i, j \le s$ 
 (including $i=j$). In these occurrences, we estimate the inner sum trivially as $p$. For the remaining pairs  $(k,\ell)$, 
 the $2s$ integers $k+h_1,   \ldots , k+h_s, \ell+h_1   \ldots \ell+h_s$ are pairwise distinct modulo $p$ and so Lemma~\ref{lem:H-Corr} 
 applies. Hence, we derive
 $$
 W^2 \ll N(Kp + K^2 p^{1/2}). 
 $$
 We now choose $K = \rf{p^{1/2}}$ for which  $W \ll N^{1/2} p^{3/4}$, which after substitution in~\eqref{eq:Shift}
 implies 
\begin{equation}\label{eq:penult}
   \sum_{n=1}^N    \cK_p(n+h_1)   \ldots  \cK_p(n+h_s) \ll  N^{1/2} p^{1/4} + p^{1/2}.
 \end{equation}
  Clearly this bound is trivial for $N \le p^{1/2}$, while for   $N > p^{1/2}$ we see that  $ N^{1/2} p^{1/4}> p^{1/2}$ 
  and hence that term $p^{1/2}$ in~\eqref{eq:penult} can be discarded. 
\end{proof}

\section{Proofs of main results}

\subsection{Proof of Theorem~\ref{conj:K-S}} 
Let  $\xi(n) = f\(T^n x\)$ as in Definition~\ref{def:det func}. 
Since $\cX$ is compact, it  follows that $f$ is uniformly continuous by~\cite[Proposition~23]{Roy}.
This means for any $\varepsilon>0$ there is $\delta>0$ such that 
\begin{equation}\label{eq:UC}
\forall x, y \in \cX, \ d(x,y)  < \delta \Longrightarrow \left|f(x)-f(y)\right| <\varepsilon.
\end{equation}

Let $\varepsilon >0$ and set $\delta$ be as in~\eqref{eq:UC}. 

Next, we observe that for any fixed $h$, we have
$$
\sfH_{a,\xi}^*(M) =  \sum_{m=1}^{M} \cK_{m}^*(a) \xi_{m}=
 \sum_{m=1}^{M} \cK_{m+h}^*(a) \xi_{m+h}+O(h), 
$$
where throughout the proof all implied constants may depend only on $f$ and do not depend
on the parameters $\varepsilon$ and $H$ we introduce below.

We now fix some sufficiently large (in terms of $\varepsilon$) integer $H$ and note that by~\eqref{eq:K* < 1} we have 
\begin{equation}\label{eq:shifts}
\sfH_{a,\xi}^*(M)=
\frac{1}{H} \sum_{m=1}^{M} \sum_{h=1}^H \cK_{m+h}^*(a) \xi_{m+h}
+O(H),
\end{equation}
since $\xi_n=f\(T^nx\)$ and $f$ is a continuous function on the compact set $\cX$. 

We now argue as in~\cite[Section~2]{Tao} and note that  since  the topological entropy of $T$ is zero. 
We also recall the definition~\eqref{eq:dn-Def}. 

Then, for a sufficiently large $H$ (depending on $\varepsilon$), there is a set 
 $\{x_1, \ldots, x_t\} \subseteq \cX$  of   cardinality  
 \begin{equation}\label{eq:Bound t}
t \ll \exp\(\varepsilon^3 H\), 
\end{equation}
which spans $(\cX,T)$ by balls $B_{d_H}(x_i,\delta)$, $i =1, \ldots, t$,  of radius   $\delta$ with respect the metric induced by $d_H$. 
 That is, 
$$
\cX  \subseteq \bigcup_{i=1}^{t}B_{d_H}(x_i,\delta), 
$$ 
for some $x_1, \ldots, x_t \in \cX$ with  $t \le \exp\(\varepsilon^3 H\)$. 

In the other words, for any $x \in \cX$, and $m \ge 1$, there is 
$i_m \in \{1,\ldots,t\}$ for which we have
$$d\(T^h x_{i_m},T^{m+h}x\) \le \delta,$$
for all $1 \le h \le H$.  

 The rest of the argument, while is inspired by the exposition of Tao~\cite{Tao},
 deviates from that in~\cite[Section~2]{Tao}.  
 
Therefore, recalling~\eqref{eq:UC} we see that for all $1 \le h \le H$, we also have 
\begin{align*}
	\frac{1}{H}\sum_{h=1}^{H} \cK_{m+h}^*(a)f\(T^{m+h}x\)& =
	\frac{1}{H}\sum_{h=1}^{H} \cK_{m+h}^*(a)f\(T^hx_{i_m}\)\\
	& \qquad \qquad +O\(\frac{\varepsilon}{H}\sum_{h=1}^{H} |\cK_{m+h}^*(a)|\).
\end{align*}  
Hence, together with~\eqref{eq:shifts} we obtain
\begin{equation}\label{eq:shifts-2}
\sfH_{a,\xi}^*(M)=
W + O\(\Delta\), 
\end{equation}
where 
$$
W = \sum_{m=1}^{M} \frac{1}{H}  \sum_{h=1}^{H} \cK_{m+h}^*(a)f\(T^hx_{i_m}\), 
$$
and 
\begin{equation}\label{eq:Bound Delta}
\begin{split} 
\Delta 	& =  \sum_{m=1}^{M}  \frac{\varepsilon}{H}\sum_{h=1}^{H} |\cK_{m+h}^*(a)|= 
 \frac{\varepsilon}{H}\sum_{h=1}^{H}  \sum_{m=1}^{M}  |\cK_{m+h}^*(a)|\\
 & \ll   \frac{\varepsilon}{H}\sum_{h=1}^{H} \( \sum_{m=1}^{M}  |\cK_{m}^*(a)| + O(H)\)  \ll  \varepsilon \overline\sfH_{a}^*(M)  +  \varepsilon H \ll   \varepsilon \overline\sfH_{a}^*(M) , 
\end{split} 
\end{equation}
as $M\to \infty$.

We fix some integer $s\ge 1$ and, applying the H{\"o}lder inequality, derive
$$
|W |^{2s}  \le  H^{-2s} M^{2s-1} \sum_{m=1}^{M}\left | \sum_{h=1}^{H} \cK_{m+h}^*(a)f\(T^hx_{i_m}\)\right |^{2s} .
$$

It is now convenient to denote 
$$
\rho(M) = \frac{\overline\sfH_{a}^*(M)  } {M}.
$$
To estimate $W$, we eliminate the dependence of $x_{i_m}$ on $m$ with the trivial 
inequality 
$$
|W |^{2s}  \le   H^{-2s} M^{2s-1} \sum_{i=1}^t  \sum_{m=1}^{M}\left | \sum_{h=1}^{H} \cK_{m+h}^*(a)f\(T^hx_{i}\)\right |^{2s} .
$$
Since $\cX$ is compact, we can define 
$$
F = \sup_{x \in \cX} |f(x)| < \infty.
$$
We fix some integer $s\ge 1$ and, applying the H{\"o}lder inequality, derive
\begin{align*}
W^{2s} & \le  H^{-2s} M^{2s-1} \sum_{i=1}^t   \sum_{m=1}^{M}\left | \sum_{h=1}^{H} \cK_{m+h}^*(a)f\(T^hx_{i}\)\right |^{2s}\\
 & =  H^{-2s} M^{2s-1}  \sum_{i=1}^t  \sum_{h_1, \ldots h_{2s} =1}^H
\prod_{j=1}^s f\(T^{h_j} x_{i}\)\overline{ f\(T^{h_{s+j}} x_{i}\)}\\
&\quad \quad \quad \quad \sum_{m=1}^{M} \prod_{j=1}^s \cK_{m+h_j}^*(a) \cK_{m+h_{s+j}}^*(a)\\
 & \le   F^{2s}  H^{-2s} M^{2s-1} \sum_{i=1}^t   \sum_{h_1, \ldots h_{2s} =1}^H \left|
 \sum_{m=1}^{M} \prod_{j=1}^s \cK_{m+h_j}^*(a) \cK_{m+h_{s+j}}^*(a) \right| \\
 & = t  F^{2s}  H^{-2s} M^{2s-1}   \sum_{h_1, \ldots h_{2s} =1}^H \left|
 \sum_{m=1}^{M} \prod_{j=1}^s \cK_{m+h_j}^*(a) \cK_{m+h_{s+j}}^*(a) \right| .
\end{align*} 

Each tuple $\(h_1, \ldots h_{2s}\) \in [1, H]^{2s}$ that does not satisfy the conditions of Conjecture~\ref{conj:K-Chowla-II} can be split into $s$ pairs $h_j=h_k$, $j \ne k$. We thus see that their cardinality may be bounded by
$$
\binom{2s}{s}  s! H^s \le (sH)^s.
$$
Therefore, under Conjecture~\ref{conj:K-Chowla-II}, recalling~\eqref{eq:K* < 1}, we have
\begin{align*}
W^{2s}&  \le  t  F^{2s}  H^{-2s} M^{2s-1}   \((sH)^s  M  \(A\rho(M)\)^{2s}  +  o\(M\rho(M)^{2s} \)\)\\
& = t (AF)^{2s} M^{2s}  \rho(M)^{2s}  \( (s/H)^s    +  o(1 )\)\\
& =   (AF)^{2s}  M^{2s}  \rho(M)^{2s}  \( t(s/H)^s    +  o(1 )\)\\
& =   (AF)^{2s}    t(s/H)^s M^{2s}  \rho(M)^{2s} , 
\end{align*} 
as $M \to \infty$ (while $s$ and $H$ are fixed). 
Thus,
\begin{equation}\label{eq:Bound W prelim}
W    \ll   t^{1/2s} \(s/H\)^{1/2}   M  \rho(M)  =  t^{1/s} \(s/H\)^{1/2}  \overline\sfH_{a}^*(M),
\end{equation}
where the implies constant only depends on $a$ and $f$.

Choosing 
$$
s = \rf{\varepsilon^2 H}
$$
provided that $\varepsilon <1/2$. 
 assuming that $H$ is large enough so that $s \ge 2$, 
and recalling~\eqref{eq:Bound t} 
we see that 
$$
t^{1/s} \le  \exp\(\varepsilon\) \le 2 \mand   \(s/H\)^{1/2} \le 2 \varepsilon, 
$$
provided that $\varepsilon <1/2$. 
Therefore, with the above choice of $s$, the bound~\eqref{eq:Bound W prelim} becomes
\begin{equation}\label{eq:Bound W}
W    \ll    \overline\sfH_{a}^*(M)  \( \varepsilon    +  o(1)\)   \ll   \varepsilon  \overline\sfH_{a}^*(M) ,
\end{equation}
when $M \to \infty$. 

Substituting the bounds~\eqref{eq:Bound Delta} and~\eqref{eq:Bound W}
in~\eqref{eq:shifts-2} we obtain
$$
\sfH_{a,\xi}^*(M) \ll     \varepsilon  \overline\sfH_{a}^*(M)
$$
 and since $\varepsilon >0$ is arbitrary, the result follows.

\subsection{Proof of Theorem~\ref{thm:K k-free}} 

Using the inclusion-exclusion principle, we write 
$$
 \sum_{m=1}^M \cK_{m} (a)  \kappa_k(m)  = \sum_{d \le M^{1/k}} \mu(d) 
 \sum_{\substack{m =1\\ m\equiv 0 \pmod {d^k}}}^M  \Kma.
 $$
 We now choose some parameter $D \in [1, M]$ 
and use Corollary~\ref{cor:K-AP} for $d \le D$, while for $d> D$ we use the trivial bound 
$$
 \sum_{\substack{m =1\\ m\equiv 0 \pmod {d^k}}}^M  \Kma \ll M^{1+o(1)} /d^k, 
$$
which is similar~\eqref{eq:Kloos-sums-triv}.   
Hence 
\begin{align*}
 \sum_{m=1}^M \cK_{m} (a)  \kappa_k(m) & \ll \sum_{d \le D} M^{1/2+o(1)} \(Md^{-2k}\)^{\gamma}   +
  \sum_{d > D}  M^{1+o(1)} /d^k\\
  &= M^{1/2+ \gamma +o(1)} \(D^{o(1)}+ D^{1-2k\gamma}\)  +   M^{1+o(1)}D^{1-k} . 
\end{align*}  
We now define $D$ by the equation 
$$
M^{1/2+ \gamma} D^{1-2k\gamma} =   M D^{1-k}
$$
or 
$$
D =  M^{1/(2k)} 
$$
(which in fact does not depend on $\gamma$).  This implies the bound
$$
\sfH_{a,\kappa_k}(M) \ll M^{1/2+ \gamma +o(1)} +  M^{1/2+1/(2k)+o(1)}.
$$

\subsection{Proof of Theorem~\ref{thm:K phi}} 
Using the well-known elementary formula (see for 
instance~\cite[Theorem~2.3]{Ap})
$$
\varphi(m) 
=  m \sum_{d \mid m} \frac{\mu(d)}{d},
$$
we write 
\begin{align*}
 \sum_{m=1}^M \Kma \varphi(m)  & =  \sum_{m=1}^M m  \Kma   \sum_{d \mid m} \frac{\mu(d)}{d}\\
&  = \sum_{d=1}^M  \frac{\mu(d)}{d} 
 \sum_{\substack{m =1\\ m\equiv 0 \pmod {d}}}^M m \Kma.
\end{align*} 
Hence, by Corollary~\ref{cor:K-AP} 
\begin{align*}
 \sum_{m=1}^M \Kma \varphi(m)   & \ll \sum_{d=1}^M M^{3/2+o(1)} d^{-1}  \(Md^{-2}\)^{\gamma}  \\
  &= M^{3/2+ \gamma +o(1)}\sum_{d=1}^M d^{-1 -2 \gamma}   \le  M^{3/2+ \gamma +o(1)},
\end{align*} 
which concludes the proof. 

\subsection{Proof of Theorem~\ref{thm:K-LinPoly}} 
We use a version of the Weyl differencing and establish the result by induction on $d\ge 0$.

For $d=0$ that is for a pure sum the result is instant from Corollary~\ref{cor:H-Corr-Incomp}.

Now assume that $d \ge 1$.  Given integers 
$h_1, \ldots, h_s$ and a polynomial $g(X) \in \R[X]$ 
we square the sum
$$
S = \sum_{n=1}^N   \prod_{j=1}^s \cK_p(n+h_j)   \ee\(g(n)\).
$$ 
and obtain 
\begin{equation}
\label{eq:S2W}
\begin{split}
S^2 & = \sum_{m,n=1}^N     \prod_{j=1}^s \cK_p(m+h_j)    \prod_{j=1}^s\cK_p(n+h_j)   \ee\(g(n)-g(m)\)\\
&=  2  W   + 
O(N). 
\end{split} 
\end{equation}
where 
$$
W = \sum_{1 \le m < n\le N}   \prod_{j=1}^s \cK_p(m+h_j)    \prod_{j=1}^s\cK_p(n+h_j)   \ee\(g(n)-g(m)\).
$$
Writing $n = m+\ell$ we obtain 
\begin{align*}
W & = \sum_{m=1}^N \sum_{\ell=1}^{N-m}     \prod_{j=1}^s \cK_p(m+h_j)    \prod_{j=1}^s\cK_p(m+\ell+h_j)   \ee\(g(m+\ell)-g(m)\)\\
& =  \sum_{\ell=1}^{N-1}  \sum_{m=1}^{N-\ell}    \prod_{j=1}^s \cK_p(m+h_j)    \prod_{j=1}^s\cK_p(m+\ell+h_j)   \ee\(g(m+\ell)-g(m)\). 
\end{align*} 

We now observe that for every $\ell$ the polynomial $g(X+\ell) - g(X)$ is a polynomial of degree at most $d-1$.

Further more, if   $(h_1, \ldots, h_s) \in \Z^s$ is normal modulo $p$ then for all but at most $s(s-1)/2$ values of $\ell=1, \ldots, N-1$
(avoiding the differences  $|h_i - h_j|$, $1 \le i <j \le s$) the vector
$$
  \(h_1, \ldots, h_s,   h_1+\ell , \ldots, h_s+\ell\) \in \Z^{2s}
$$ 
is also normal modulo $p$. For these exceptional values of $\ell$ we estimate the sums over $m$ trivially as $N$, and apply the
induction assumption for the remaining values of $\ell$.
Hence 
$$
W \ll   N^{2- 2^{-d+1}}p^{2^{-d}} ( \log p)^{2^{-d+1}} + N, 
$$
which after substitution in~\eqref{eq:S2W} implies
$$
S \le   N^{1- 2^{-d}}p^{2^{-d-1}}  ( \log p)^{2^{-d}}  + N^{1/2}. 
$$ 
It remain to note that for $N \le p^{1/2}$ the  result is trivial and for $N \ge p^{1/2}$ we have 
$$
N^{1- 2^{-d}}p^{2^{-d-1}}  ( \log p)^{2^{-d}} \gg N^{1- 2^{-d}}p^{2^{-d-1}} \ge N^{1/2}, 
$$
from which the result follows.

\subsection{Proof of Theorem~\ref{thm:VertCorr-Incomp-II}} We start by noticing that 
$$\big|\cK_p(n)\big|\le 2,  \qquad \forall n \in \N.$$
We proceed also as in the proof Theorem~\ref{conj:K-S} by writing
$$\sum_{n=1}^{N}\cK_p(n) \xi_{n}=\sum_{n=1}^{N} \cK_{p}(n+h) \xi_{n+h}
+O(h).$$
We again fix some  $\varepsilon>0$ and let $\delta$ be as in~\eqref{eq:UC}. 
We choose also the parameters $H$ and $t$ as in~\eqref{eq:Bound t}. Therefore,
$$\sum_{n=1}^{N}\cK_p(n) \xi_{n}=\frac{1}{H}\sum_{h=1}^{H}\sum_{n=1}^{N} \cK_{p}(n+h) \xi_{n+h}
+O(H),$$
Recalling that $\xi_n=f\(T^nx\)$, as in the proof of Theorem~\ref{conj:K-S}, we have
$$\frac{1}{H}\sum_{h=1}^{H}\cK_{p}(n+h) f\(T^{n+h}x\)=
\frac{1}{H}\sum_{h=1}^{H}\cK_{p}(n+h) f\(T^{h}x_{i_n}\)+O(\varepsilon).$$

Hence, it is now enough to show for some $H=o(N)$ that
$$
W= \sum_{n=1}^{N}\frac{1}{H}\sum_{h=1}^{H}\cK_{p}(n+h) f\(T^{h}x_{i_n}\)
$$ 
satisfies
\begin{equation}
\label{eq:W-bound}
W \ll \varepsilon N.
\end{equation}

Now, for fix $s \geq 2$, we apply H\"{o}lder inequality and we relax the dependence of $i_n$ to obtain 
\begin{align*}
|W|^{2s}  &\le  \left |\sum_{n=1}^{N}\frac{1}{H}\sum_{h=1}^{H}\cK_{p}(n+h) f\(T^{h}x_{i_n}\)\right|^{2s} \\
	&\le H^{-2s}N^{2s-1} \sum_{i=1}^t  \sum_{h_1, \ldots h_{2s} =1}^H
	\prod_{j=1}^s f\(T^{h_j} x_{i}\)\overline{ f\(T^{h_{s+j}} x_{i}\)} \\
&\quad \quad \quad\quad	\sum_{n=1}^{N} \prod_{j=1}^s \cK_{p}(m+h_j) \cK_{p}(m+h_{s+j})  \\
& \le t F^{2s} H^{-2s}N^{2s-1}  \sum_{h_1, \ldots h_{2s} =1}^H\left |
\sum_{n=1}^{N} \prod_{j=1}^s \cK_{p}(m+h_j)  \cK_{p}(m+h_{s+j}) \right|. 
\end{align*}
Each tuple $\(h_1, \ldots h_{2s}\) \in [1, H]^{2s}$ that does not satisfy the conditions of Corollary~\ref{cor:H-Corr-Incomp} can be split into $s$ pairs $(j,k)$ such that $h_j \equiv h_k \pmod p$. We thus see that their cardinality may be bounded by
$$
\binom{2s}{s}  s! H^s \le (sH)^s.
$$
We estimate these tuples trivially and bound the others using Corollary~\ref{cor:H-Corr-Incomp}. We thus find

\begin{align*}
|W|^{2s} &\ll t F^{2s} H^{-2s}N^{2s-1}\((sH)^s N 4^s+H^{2s}N^{1/2}p^{1/4} \)\\
&\ll t F^{2s}\(\frac{s}{H}\)^{s}N^{2s}+t F^{2s}N^{2s-1/2}p^{1/4}\\
&\ll t F^{2s}\(\frac{s}{H}\)^{s}N^{2s}+t F^{2s}N^{2s}\psi(p)^{-1/2}\\
&\ll t F^{2s}  \(\frac{s}{H}\)^{s}N^{2s}, 
\end{align*}
provided that $p$ is sufficiently large in terms of $H$ and $s$. This gives
$$
W \ll  t^{1/2s}  \sqrt{\frac{s}{H}}N.
$$
Choosing $s= \fl{\varepsilon^2H}$ with $\varepsilon<1/2$ and assuming $H$ is large enough so that $s \geq 2$. For the ease of the reader, we repeat the dependencies of the involved parameters
$$
\varepsilon \leftarrow \delta \leftarrow H \leftarrow s \leftarrow p,
$$
where each parameter may depend on all of the precessing ones. In conclusion, by~\eqref{eq:Bound t} we see that $t^{1/2s} \le \sqrt{2}$ and 
 $\sqrt{s/H} \le 2 \varepsilon$. Therefore, we obtain~\eqref{eq:W-bound}
 and complete the proof.

\section*{Acknowledgement}

This work started  during a very enjoyable visit by I.S.
to the Universit{\'e} de Rouen Normandie, whose hospitality is very much appreciated. 
During this work I.S. was also  supported  by ARC Grants~DP170100786 and~DP200100355.  

R.S. would like to extend his gratitude to his employer, the Institute for Mathematical Research (FIM) at ETH Zürich.

\end{document}